\newtheorem{theorem}{Theorem}[section]
\newtheorem{lemma}[theorem]{Lemma}
\newtheorem{corollary}[theorem]{Corollary}
\theoremstyle{definition}
\newtheorem{remark}{Remark}[section]
\newtheorem{example}{Example}[section]
\newtheorem{definition}{Definition}[section]
\def\th@plain{%
  \thm@notefont{}
  \itshape 
}
\def\th@definition{%
  \thm@notefont{}
  \normalfont 
}
\DeclareMathOperator{\co}{co}
\DeclareMathOperator{\dom}{dom}
\DeclareMathOperator*{\argmin}{argmin}
\begin{document}

\title{{\textbf{SDP Duals without Duality Gaps for a Class of Convex Minimax Programs}\footnote{Research was partially supported by a grant from the Australian Research Council.}}}

\author{\textsc{V. Jeyakumar}\thanks{Corresponding author. Department of Applied Mathematics, University of New South Wales, Sydney 2052, Australia. E-mail: v.jeyakumar@unsw.edu.au} \qquad and \qquad \textsc{J. Vicente-P\'erez}\thanks{Department of Applied Mathematics, University of New South Wales, Sydney 2052, Australia. This author has been partially supported by the MICINN of Spain, Grant MTM2011-29064-C03-02. E-mail: jose.vicente@ua.es} }

\date{May 5, 2013}

\maketitle

\begin{abstract}
\noindent
In this paper we introduce a new dual program, which is representable as a semi-definite linear programming problem, for a primal convex minimax programming model problem and show that there is no duality gap between the primal and the dual whenever the functions involved are SOS-convex polynomials. Under a suitable constraint qualification, we derive strong duality results for this class of minimax problems. Consequently, we  present applications of our results to robust SOS-convex programming problems under data uncertainty and to minimax fractional programming problems with SOS-convex polynomials.  We obtain these results by first establishing sum of squares polynomial representations of non-negativity of a convex max function over a system of SOS-convex constraints. The new class of SOS-convex polynomials is an important subclass of convex polynomials and it includes convex quadratic functions and separable convex polynomials. The SOS-convexity of polynomials can numerically be checked by solving semi-definite programming problems whereas numerically verifying convexity of polynomials is generally very hard.

\bigskip

\noindent{\footnotesize \textsl{Keywords:} SOS-convex polynomials, sum of squares polynomials, minimax programming, semidefinite programming, zero duality gap.}
\end{abstract}


\section{Introduction}
\label{SEC1}


Consider the minimax programming problem
\begin{equation*}
\begin{array}{ccl}
(P)   &   \inf\limits_{x\in\mathbb{R}^n}  &  \max\limits_{j\in \mathbb{N}_r}\,p_j(x)\\
	       &  \text{s.t.}  &  g_i(x) \leq 0, \ i\in \mathbb{N}_m,
\end{array}
\end{equation*}
where $p_j$, for $j\in \mathbb{N}_r:=\{1,\ldots,r\}$, and $g_i$, for $i\in \mathbb{N}_m:=\{1,\ldots,m\}$, are real polynomials on $\mathbb{R}^n$.
\smallskip

Discrete minimax model problems of the form $(P)$ arise in many areas of applications in engineering and commerce as resource allocation and planning problems (\cite{ibaraki} and other references therein). More recently, these models have appeared in robust optimization \cite{robust,bert} which is becoming increasingly important in optimization due to the reality of uncertainty in many real-world optimization problems and the importance of finding solutions that are immunized against data uncertainty. For instance, consider the optimization model problem with the data uncertainty in the constraints and in the objective function:
\begin{equation*}
	\inf \{f_0(x,v_0) \ : \ f_i(x,v_i) \leq 0,\ \forall i=1,\ldots,k\},
\end{equation*}
where $v_i\in\mathbb{R}^{n_i}$ is an uncertain parameter belonging to a finite uncertainty set $\mathcal V_i:=\{ v_i^1,\ldots,v_i^{s_i}\}$ for each $i\in\{0\}\cup\mathbb{N}_k$. The robust counterpart of the uncertain problem, which finds a robust solution that is immunized against all the possible uncertain scenarios, is then given by the minimax model problem of the form $(P)$,
\begin{equation*}
  \inf\limits_{x\in\mathbb{R}^n} \left\{ \max\limits_{v_0\in \mathcal V_0}  f_0(x,v_0)  :   f_i(x,v_i^j) \leq 0,\ \forall j = 1,\ldots,s_i, \forall i=1,\ldots,k \right\} ,
\end{equation*}
where the uncertain constraints are enforced for every possible value of the parameter $v_i^j$ within their uncertainty sets $\mathcal V_i$.

In the case of standard convex polynomial programming problem where $r=1$ and the functions involved in our model problem $(P)$ are convex polynomials, it is known that there is no duality gap between $(P)$ and its Lagrangian dual \cite{Auslender}. However, the Lagrangian dual, in general, may not easily be solvable. Recent research has shown that whenever $r=1$ and the functions involved in $(P)$ are \emph{SOS-convex polynomials} (see Definition 2.1), the problem $(P)$ enjoys no duality gap between $(P)$ and its dual problem which is representable as a semidefinite programming problem (SDP). Such a duality result is of great interest in optimization because SDP's can efficiently be solved by interior-point methods and so the optimal value of the original model $(P)$ can be found by solving its dual problem \cite{jeya-li-siam}. The new class of SOS-convex polynomials from algebraic geometry \cite{HeNie10,Lasserre} is an important subclass of convex polynomials and it includes convex quadratic functions and separable convex polynomials. The SOS-convexity of polynomials can numerically be checked by solving semidefinite programming problems whereas deciding convexity of polynomials is generally very hard \cite{Parrilo,Parrilo1}.

This raises the very basic issue of which \emph{convex minimax programming problems} can be presented with zero duality gap where the duals can be represented as semidefinite linear programming problems. In this paper we  address  this issue by way of examining minimax programming problems $(P)$ with SOS-convex polynomials. We make the following contributions to minimax optimization.
\smallskip

I. Without any qualifications, we establish dual characterizations of non-negativity of max functions of convex polynomials over a system of convex polynomial inequalities and then derive sum-of-squares-polynomial representations of non-negativity of max functions of SOS-convex polynomials over a system of SOS-convex polynomial inequalities.
\smallskip

II. Using the sum-of-squares-polynomial representations, we introduce a dual program for $(P)$, which is representable as a semidefinite linear programming problem, and show that there is no duality gap between $(P)$ and its dual
whenever the functions $p_j$'s and $g_i$'s are SOS-convex polynomials. Under a constraint qualification, we prove that strong duality holds between $(P)$ and its dual problem. As an application, we prove that the value of a robust convex programming problem under polytopic data uncertainty is equal to its SDP dual program. The significance of our duality theorems is that the value of our model problem $(P)$ can easily be found by solving its SDP dual problem.
\smallskip

III. Under a constraint qualification, we establish that strong duality continues to hold for SOS-convex minimax fractional programming problems with their corresponding SDP duals, including minimax linear fractional programming problems for which the SDP dual problems reduce to linear programming problems.
\smallskip

The outline of the paper is as follows. Section \ref{SEC2} provides dual characterizations and representations of non-negativity of max functions of convex polynomials as well as SOS-convex polynomials over a system of inequalities. Section \ref{SEC3} presents zero duality gaps and strong duality results for our model problem $(P)$. Section \ref{SEC4} gives applications of our duality results to classes of robust convex optimization problems and minimax fractional programming problems. Appendix provides basic re-formulation of our dual problem as semidefinite linear programming problem.


\section{Dual Characterizations and Representations of Non-negativity}
\label{SEC2}


In this Section, we present dual characterizations of solvability of inequality systems involving convex as well as SOS-convex polynomials. Firstly, we shall recall a few basic definitions and results which will be needed later in the sequel. We say that a real polynomial $f$ is \emph{sum of squares} \cite{Laurent_survey} if there exist real polynomials $f_j$, $j=1,\ldots,s$, such that $f=\sum_{j=1}^{s}{f_j^2}$. The set of all sum of squares real polynomials is denoted by $\Sigma^2$, whereas the set consisting of all sum of squares real polynomials with degree at most $d$ is denoted by $\Sigma^2_d$.  Similarly, we say a matrix polynomial $F \in \mathbb{R}[x]^{n \times n}$ is a SOS-matrix polynomial if $F(x)=H(x)H(x)^T$ where $H(x) \in \mathbb{R}[x]^{n \times s}$ is a matrix polynomial for some $s \in \mathbb{N}$. We now introduce the definition of SOS-convex polynomial.

\begin{definition}[{\cite{Parrilo,HeNie10}}]
A real polynomial $f$ on $\mathbb{R}^n$ is called \textit{SOS-convex} if the Hessian matrix function $x\mapsto \nabla^2 f(x)$ is a SOS-matrix polynomial.
\end{definition}

Clearly, a SOS-convex polynomial is convex. However, the converse is not true. Thus, there exists a convex polynomial which is not SOS-convex \cite{Parrilo}. It is known that any convex quadratic function and any convex separable polynomial is a SOS-convex polynomial. Moreover, a SOS-convex polynomial can be non-quadratic and non-separable. For instance, $f(x)=x_1^8+x_1^2+x_1x_2+x_2^2$ is a SOS-convex polynomial (see \cite{Nie0}) which is non-quadratic and non-separable.

The following basic known results on convex polynomials play key roles throughout the paper.
\begin{lemma}[{\cite[Lemma 8]{HeNie10}}]
\label{polysos}
Let $f$ be a SOS-convex polynomial. If $f(u)=0$ and $\nabla f(u)=0$ for some $u\in\mathbb{R}^n$, then $f$ is a sum of squares polynomial.
\end{lemma}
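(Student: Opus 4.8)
The plan is to reduce to the case $u=0$ and then invoke the second-order Taylor formula with integral remainder, converting the SOS-matrix representation of the Hessian into a sum of squares by integrating out the remainder variable.

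First I would observe that both SOS-convexity and the sum-of-squares property are invariant under translation. Writing $\tilde f(y):=f(u+y)$, SOS-convexity furnishes a matrix polynomial $H$ with $\nabla^2 f=HH^T$, so $\nabla^2\tilde f(y)=\nabla^2 f(u+y)=H(u+y)H(u+y)^T$, showing $\tilde f$ is again SOS-convex; and clearly $f=\sum_k g_k^2$ holds if and only if $\tilde f=\sum_k g_k(\cdot+u)^2$ does. Since $\tilde f(0)=f(u)=0$ and $\nabla\tilde f(0)=\nabla f(u)=0$, it suffices to treat the case $u=0$, $f(0)=0$, $\nabla f(0)=0$.

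Next, by Taylor's theorem with integral remainder and the vanishing of the zeroth- and first-order terms,
$$f(x)=\int_0^1 (1-t)\,x^T\nabla^2 f(tx)\,x\,dt .$$
Substituting $\nabla^2 f(tx)=H(tx)H(tx)^T$ gives $x^T\nabla^2 f(tx)\,x=\|H(tx)^T x\|^2=\sum_{k} w_k(x,t)^2$, where $w(x,t):=H(tx)^T x$ is a vector whose entries are polynomials in $(x,t)$. Expanding each coordinate in powers of $t$ as $w_k(x,t)=\sum_j a_{kj}(x)\,t^j$ with $a_{kj}\in\mathbb{R}[x]$, the integral collapses to a finite sum $f(x)=\sum_k\sum_{j,\ell} c_{j\ell}\,a_{kj}(x)\,a_{k\ell}(x)$, where $c_{j\ell}:=\int_0^1(1-t)\,t^{j+\ell}\,dt$.

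The crux is to recognize the matrix $C:=(c_{j\ell})$ as positive semidefinite. For any vector $\xi=(\xi_j)$ one computes $\sum_{j,\ell}c_{j\ell}\,\xi_j\xi_\ell=\int_0^1(1-t)\big(\sum_j\xi_j t^j\big)^2\,dt\ge 0$, because the weight $1-t$ is nonnegative on $[0,1]$; hence $C=LL^T$ for some real matrix $L$. Writing $a_k(x)$ for the vector $(a_{kj}(x))_j$, we then obtain $f(x)=\sum_k a_k(x)^T C\,a_k(x)=\sum_k\|L^T a_k(x)\|^2$, which exhibits $f$ as a sum of squares of polynomials, as required. The only step demanding genuine care is this final one: one must justify that integrating a $t$-dependent sum of squares against a nonnegative weight preserves the sum-of-squares structure in $x$, and the positive semidefiniteness of the moment matrix $C$ is precisely what delivers this.
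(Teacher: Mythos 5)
Your proof is correct. Note that the paper itself offers no proof of this statement: it is quoted directly as Lemma~8 of the cited work of Helton and Nie, so there is no internal argument to compare against. What you have written is essentially a faithful reconstruction of the Helton--Nie proof of that lemma: translation to $u=0$, the Taylor identity $f(x)=\int_0^1 (1-t)\,x^T\nabla^2 f(tx)\,x\,dt$, substitution of the SOS-matrix factorization $\nabla^2 f = HH^T$, and the key observation that the moment matrix $\bigl(\int_0^1 (1-t)t^{j+\ell}\,dt\bigr)_{j,\ell}$ is positive semidefinite, so that integration in $t$ preserves the sum-of-squares structure in $x$ after factoring $C=LL^T$. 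All steps check out, including the one you flag as delicate.
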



\begin{lemma}[{\cite[Theorem 3]{BeKl02}}]
\label{minattain}
Let $f_0,f_1,\ldots,f_m$ be convex polynomials on $\mathbb{R}^n$. Suppose that $\inf_{x\in C}f_0(x)>-\infty$ where $C:=\{x \in \mathbb{R}^n : f_i(x) \leq 0, i\in\mathbb{N}_m \} \neq \emptyset$.  Then, $\argmin_{x\in C} f_0(x) \neq \emptyset$.
\end{lemma}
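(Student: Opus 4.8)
The plan is to argue from a minimizing sequence, extract a recession direction along which the objective is forced to be \emph{constant}, and then reduce the dimension and close by induction on $n$. First I would take a minimizing sequence $(x_k)\subseteq C$ with $f_0(x_k)\to\mu:=\inf_{x\in C}f_0(x)$. If $(x_k)$ has a bounded subsequence, then since $C$ is closed (each $f_i$ is continuous) a cluster point $x^*\in C$ satisfies $f_0(x^*)=\mu$, and we are done. So the only obstruction is an unbounded minimizing sequence; passing to a subsequence I may assume $\|x_k\|\to\infty$ and $x_k/\|x_k\|\to d$ with $\|d\|=1$. Because $C$ is closed and convex, $d$ lies in the recession cone $0^{+}C$, so $x+td\in C$ for every $x\in C$ and $t\ge 0$. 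Using convexity of $f_0$ along the segments joining a fixed base point $x_0$ to $x_k$ together with the boundedness of $f_0(x_k)$, I would show $f_0(x_0+td)\le f_0(x_0)$ for all $x_0\in\mathbb{R}^n$ and $t\ge 0$; that is, $f_0$ is bounded above along every ray in the direction $d$.

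The decisive polynomial ingredient is a one-variable fact which I would isolate as a lemma: if $\varphi$ is a convex polynomial in one real variable that is bounded above on $[0,\infty)$, then $\varphi$ is constant. Indeed, convexity forces $\varphi$ to be non-increasing on $[0,\infty)$ and $\varphi'$ to be non-decreasing; since $f_0$ is bounded below, $\varphi$ is also bounded below, hence converges, so $\varphi'(t)\to 0$; but a polynomial tending to $0$ at infinity must vanish identically, whence $\varphi'\equiv 0$. Applying this to $t\mapsto f_0(x+td)$ at every base point $x$ shows that $f_0$ is constant along the whole direction $d$. This is exactly where the statement fails for general convex functions (e.g. $e^{x}$) and where polynomiality is genuinely used.

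With a nonzero $d\in 0^{+}C$ along which $f_0$ is constant in hand, I would induct on the dimension $n$. When $\pm d$ both belong to $0^{+}C$ (so $d$ lies in the lineality space of $C$) the reduction is clean: writing $\mathbb{R}^n=\mathbb{R}d\oplus W$ with $W=d^{\perp}$, the set $C$ is the cylinder $(C\cap W)+\mathbb{R}d$, the restrictions $f_i|_{W}$ are again convex polynomials, $f_0|_{W}$ is a convex polynomial with $\inf_{C\cap W}f_0|_{W}=\mu>-\infty$ and $C\cap W\ne\emptyset$, so the inductive hypothesis in dimension $n-1$ yields a minimizer, which also minimizes over $C$.

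The main obstacle is the genuinely one-sided case $d\in 0^{+}C$ but $-d\notin 0^{+}C$: one cannot simply project the minimizing sequence into $W$ while preserving feasibility, since moving against $d$ eventually violates a constraint. I would handle this by translating each $x_k$ in the direction $-d$ until the first constraint $f_{i^{*}}$ becomes active, producing a new minimizing sequence lying in the boundary piece $C\cap\{f_{i^{*}}=0\}$ (the value of $f_0$ being preserved by constancy along $d$), and then iterating. The hard part — and the technical heart of the Frank--Wolfe-type theorem of Belousov and Klatte — is to organize this descent so that it terminates, i.e.\ to exhibit a strictly decreasing complexity measure while keeping the reduced feasible set describable by convex polynomial inequalities, since the naive equality constraint $f_{i^{*}}=0$ is not itself of that form.
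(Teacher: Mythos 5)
First, a point of comparison: the paper does not prove this lemma at all --- it is quoted verbatim from Belousov and Klatte \cite[Theorem 3]{BeKl02} and used as a black box. So what you are attempting is a proof of that cited theorem, and your proposal should be judged as such.

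The parts of your sketch that work are the standard ones: extracting a cluster point when the minimizing sequence is bounded; obtaining $d\in 0^{+}C$ with $\|d\|=1$ from an unbounded minimizing sequence (valid since $C$ is closed and convex); showing $f_0(x_0+td)\le f_0(x_0)$ for all $t\ge 0$ via convexity and boundedness of $f_0(x_k)$; and the reduction to dimension $n-1$ when $d$ lies in the lineality space of $C$. Two caveats even here. Your one-variable lemma is false as stated: $\varphi(t)=-t$ is a convex polynomial bounded above on $[0,\infty)$ and is not constant; you need boundedness below as well, which your own proof actually invokes. And that lower bound is only available for base points $x\in C$ (where $x+td\in C$ gives $\varphi\ge \mu$), not for \emph{every} $x\in\mathbb{R}^n$ as you claim; fortunately, constancy of $f_0$ along $d$ from base points in $C$ is all the lineality-space reduction needs.

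The genuine gap is the one-sided case $d\in 0^{+}C$, $-d\notin 0^{+}C$, and you concede it yourself. Your plan --- slide each $x_k$ along $-d$ until some constraint $f_{i^{*}}$ becomes active, obtain a minimizing sequence in $C\cap\{f_{i^{*}}=0\}$, and iterate --- does not close, for two concrete reasons. First, $C\cap\{f_{i^{*}}=0\}$ is in general not convex (take $C=[-1,1]$ and $f_{i^{*}}(x)=x^2-1$, giving the two-point set $\{-1,1\}$), so it is not a feasible set of the form $\{x: h_i(x)\le 0,\ h_i \text{ convex polynomial}\}$ and the inductive hypothesis cannot be applied to it. Second, even staying inside $C$, the iteration has no termination certificate: a new unbounded minimizing sequence on the active piece produces a new recession direction $d'$, but nothing forces the constraint $f_{i^{*}}$ to remain active along the new slide (a convex polynomial $t\mapsto f_{i^{*}}(y+td')$ that vanishes at $t=0$ and is $\le 0$ on $[0,\infty)$ may strictly decrease), so the set of ``pinned'' constraints is not monotone and your complexity measure never materializes. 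Organizing this descent so that it terminates is precisely the technical content of the Frank--Wolfe-type theorem of Belousov and Klatte --- which is exactly why the paper cites it rather than reproving it. As it stands, your argument proves the result only when every recession direction of $C$ produced by a minimizing sequence lies in the lineality space (for instance, when $C$ is an affine subspace or is bounded), not in the generality the lemma asserts.
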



\begin{corollary}
\label{polyissos}
Any nonnegative SOS-convex polynomial on $\mathbb{R}^n$ is a sum of squares polynomial. 
\end{corollary}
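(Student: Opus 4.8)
The plan is to deduce this from Lemma \ref{polysos} by producing a point at which both $f$ and $\nabla f$ vanish, and the key realization is that this may require a harmless constant shift. So I would first use Lemma \ref{minattain} to locate a global minimizer, and then adjust $f$ by its minimum value so that the hypotheses of Lemma \ref{polysos} are met.

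Concretely, let $f$ be a nonnegative SOS-convex polynomial. Since $f\geq 0$, we have $\inf_{x\in\mathbb{R}^n}f(x)\geq 0>-\infty$. Applying Lemma \ref{minattain} with no constraints (i.e.\ $C=\mathbb{R}^n$, the case $m=0$, so that $C\neq\emptyset$ trivially) shows that this infimum is attained at some $u\in\mathbb{R}^n$. Because $f$ is differentiable and $u$ is a global minimizer, the first-order optimality condition yields $\nabla f(u)=0$.

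The obstacle at this stage is that the minimum value $f(u)$ need not be zero --- it is only guaranteed to be nonnegative --- so Lemma \ref{polysos} cannot be invoked for $f$ itself. To get around this, I would pass to the shifted polynomial $g:=f-f(u)$. Subtracting a constant does not change the Hessian, so $\nabla^2 g=\nabla^2 f$ remains a SOS-matrix polynomial and $g$ is again SOS-convex. By construction $g(u)=0$ and $\nabla g(u)=\nabla f(u)=0$, so Lemma \ref{polysos} now applies and gives $g\in\Sigma^2$.

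Finally, I would recover $f$ by writing $f=g+f(u)$. Since $f(u)\geq 0$, the constant $f(u)=\big(\sqrt{f(u)}\big)^2$ is itself a sum of squares, and a sum of two sum-of-squares polynomials is a sum of squares; hence $f\in\Sigma^2$, as claimed. The only genuinely delicate point in the whole argument is the recognition that the minimum value is in general strictly positive, which forces the shift $g=f-f(u)$ in order to satisfy the vanishing hypotheses of Lemma \ref{polysos}; everything else is a direct application of the two preceding lemmas together with the stability of SOS-convexity under constant translation.
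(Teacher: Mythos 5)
Your proof is correct and follows essentially the same route as the paper's: invoke Lemma \ref{minattain} to get a global minimizer, shift $f$ by its (nonnegative) minimum value so that Lemma \ref{polysos} applies, and then absorb the constant back as a square. The extra details you spell out (the first-order condition $\nabla f(u)=0$ and writing $f(u)=\bigl(\sqrt{f(u)}\bigr)^2$) are implicit in the paper's version but change nothing of substance.
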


\begin{proof}
Let $f$ be a nonnegative SOS-convex polynomial on $\mathbb{R}^n$. In virtue of Lemma \ref{minattain}, we know that $\min_{x\in\mathbb{R}^n} f(x) = f(x^*)$ for some $x^*\in\mathbb{R}^n$. Therefore, $h := f - f(x^*) $ is a nonnegative SOS-convex polynomial such that $h(x^*) = 0$ and $\nabla h(x^*)=0$. By applying Lemma \ref{polysos} we get that $h$ is a sum of squares polynomial, so
$ f - f(x^*) = \sigma $
for some $\sigma \in \Sigma^2$. Therefore, $f = \sigma + f(x^*) $ is a sum of squares polynomial since $f(x^*)\geq 0$.
\end{proof}


Let $\Delta$ be the simplex in $\mathbb{R}^r$, that is, $\Delta := \left\{\delta\in \mathbb{R}^r_+ : \sum_{j=1}^r \delta_j = 1 \right\}$.


\begin{theorem}[{Dual characterization of non-negativity}]
\label{qfreeconvexeq}
Let $p_j$ and $g_i$ be convex polynomials for all $j\in \mathbb{N}_r$ and $i\in \mathbb{N}_m$, with $\mathcal F:= \{ x\in\mathbb{R}^n : g_i(x) \leq 0, i\in \mathbb{N}_m\} \neq \emptyset$. Then, the following statements are equivalent:
\begin{enumerate}
\item[\rm($i$)] $g_i(x) \leq 0, \, i\in \mathbb{N}_m \Rightarrow \max\limits_{j\in \mathbb{N}_r} p_j(x) \geq 0 $.
\item[\rm($ii$)] $(\forall \varepsilon>0)$ $(\exists\,\bar{\delta}\in\Delta, \bar{\lambda}\in\mathbb{R}^m_+)$ \ $\sum\limits_{j=1}^{r}{\bar{\delta}_j p_j} + \sum\limits_{i=1}^{m}{\bar{\lambda}_i g_i} + \varepsilon > 0 $.
\end{enumerate}
\end{theorem}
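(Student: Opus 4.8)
The plan is to read both statements as a minimax equality for the Lagrangian $L(x,\delta,\lambda)=\sum_{j}\delta_j p_j(x)+\sum_i\lambda_i g_i(x)$ on $\mathbb{R}^n\times\Delta\times\mathbb{R}^m_+$. Since $\sup_{\lambda\in\mathbb{R}^m_+}\sum_i\lambda_i g_i(x)$ equals $0$ on $\mathcal F$ and $+\infty$ off $\mathcal F$, while $\sup_{\delta\in\Delta}\sum_j\delta_j p_j(x)=\max_{j}p_j(x)$, statement $(i)$ says exactly $\inf_x\sup_{\delta,\lambda}L\ge0$, whereas $(ii)$ says $\sup_{\delta,\lambda}\inf_x L\ge0$. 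The implication $(ii)\Rightarrow(i)$ is then just weak duality: given $x\in\mathcal F$ and $\varepsilon>0$, take the corresponding $\bar\delta\in\Delta$, $\bar\lambda\in\mathbb{R}^m_+$; since $\sum_i\bar\lambda_i g_i(x)\le0$ and $\sum_j\bar\delta_j p_j(x)\le\max_j p_j(x)$, the inequality in $(ii)$ forces $\max_j p_j(x)>-\varepsilon$, and letting $\varepsilon\downarrow0$ gives $(i)$. The whole difficulty lies in the reverse, qualification-free direction $(i)\Rightarrow(ii)$.

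For $(i)\Rightarrow(ii)$ I would first eliminate the index $j$. Using $\max_{j\in\mathbb{N}_r}p_j(x)=\max_{\delta\in\Delta}\sum_j\delta_j p_j(x)$ and the fact that $(x,\delta)\mapsto\sum_j\delta_j p_j(x)$ is convex in $x$ and affine (hence concave and upper semicontinuous) in $\delta$, with $\Delta$ compact convex and $\mathcal F$ convex, Sion's minimax theorem yields $\inf_{x\in\mathcal F}\max_{\delta\in\Delta}\sum_j\delta_j p_j(x)=\max_{\delta\in\Delta}\inf_{x\in\mathcal F}\sum_j\delta_j p_j(x)$. By $(i)$ the left-hand side is nonnegative, and since $\delta\mapsto\inf_{x\in\mathcal F}\sum_j\delta_j p_j(x)$ is concave and upper semicontinuous on the compact set $\Delta$, the maximum is attained at some $\bar\delta\in\Delta$. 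Writing $q:=\sum_j\bar\delta_j p_j$, again a convex polynomial, the task reduces to the single-objective statement: if $\inf_{\mathcal F}q\ge0$, then for every $\varepsilon>0$ there is $\lambda\in\mathbb{R}^m_+$ with $q+\sum_i\lambda_i g_i+\varepsilon>0$ on $\mathbb{R}^n$.

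To produce such a multiplier I would fix $\varepsilon>0$ and separate in $\mathbb{R}^{1+m}$. Consider the convex, upward-closed set $B:=\{(s,y)\in\mathbb{R}\times\mathbb{R}^m:\exists\,x\in\mathbb{R}^n,\ q(x)\le s,\ g_i(x)\le y_i\ \forall i\}$. The point $(-\varepsilon,0)$ does not belong to $B$, for otherwise some $x\in\mathcal F$ would satisfy $q(x)\le-\varepsilon<0$, contradicting $\inf_{\mathcal F}q\ge0$. Assuming $B$ is closed (see below), strict separation of $(-\varepsilon,0)$ from $B$ produces $(\mu,\nu)\neq0$ and $\gamma$ with $\mu s+\sum_i\nu_i y_i\ge\gamma>-\mu\varepsilon$ for all $(s,y)\in B$; upward-closedness forces $\mu\ge0$ and $\nu\ge0$. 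Crucially, the strict inequality rules out $\mu=0$: evaluating at $(q(x_0),g(x_0))\in B$ for a fixed $x_0\in\mathcal F$ would give $\sum_i\nu_i g_i(x_0)\ge\gamma>0$, impossible since $\nu\ge0$ and $g_i(x_0)\le0$. Hence $\mu>0$, and setting $\lambda_i:=\nu_i/\mu\ge0$ and evaluating the separating inequality at $(q(x),g(x))\in B$ for arbitrary $x$ yields $q(x)+\sum_i\lambda_i g_i(x)\ge\gamma/\mu>-\varepsilon$. Substituting back $q=\sum_j\bar\delta_j p_j$ gives exactly $(ii)$.

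The main obstacle, and the only place where polynomiality is essential, is the closedness of $B$; for merely convex $q,g_i$ this may fail and $(ii)$ can genuinely break down, which is precisely why the statement is asymptotic in $\varepsilon$ rather than an exact multiplier rule. To close $B$ I would invoke the Frank--Wolfe/Belousov--Klatte theory for convex polynomials underlying Lemma \ref{minattain}, whose content is that a convex polynomial program bounded below over a nonempty convex-polynomial feasible set attains its infimum. The delicate point is that a minimizing sequence associated with a boundary point of $\overline B$ must not drift to infinity; one controls this through a recession-direction argument, showing that any such escape direction would be a direction along which $q$ does not increase and the constraints stay feasible, so the infimum is in fact attained at a genuine point and the limit point lies in $B$. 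This lower-semicontinuity/attainment step is the technical heart of the proof, while the separation and the passage from the single polynomial $q$ back to the simplex are comparatively routine.
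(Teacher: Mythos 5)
Your proof is correct, but it takes a genuinely different route from the paper's. The paper never decouples the two multiplier blocks: it perturbs to $f_j := p_j + \varepsilon$, forms the image-type set $G \subset \mathbb{R}^{r+m}$ of all $z=(\underline z,\overline z)$ such that some $x$ satisfies $f_j(x) \le \underline z_j$ and $g_i(x) \le \overline z_i$, proves $G$ closed by minimizing $\|u-z^*\|^2$ over this system and invoking the Belousov--Klatte attainment result (Lemma \ref{minattain}), strictly separates $0$ from $G$, and then reads off \emph{both} $\bar\delta$ and $\bar\lambda$ from the components of the single separating functional (the $p$-block is shown nonzero by evaluating at a feasible point, then normalized to lie in $\Delta$); in particular both multipliers depend on $\varepsilon$. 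You instead first apply Sion's minimax theorem to the pair $(\mathcal F,\Delta)$ --- legitimate, since $\sum_j \delta_j p_j(x)$ is convex and continuous in $x$, affine in $\delta$, $\mathcal F$ is convex and $\Delta$ is compact convex, and the marginal $\delta \mapsto \inf_{x\in\mathcal F}\sum_j\delta_j p_j(x)$ is concave and upper semicontinuous so its maximum over $\Delta$ is attained --- to fix one $\bar\delta\in\Delta$, independent of $\varepsilon$, with $\inf_{\mathcal F} q \ge 0$ for $q:=\sum_j\bar\delta_j p_j$. This reduces the theorem to its scalar case $r=1$, and your separation in $\mathbb{R}^{1+m}$ (upward-closedness forcing $\mu,\nu\ge 0$, the exclusion of $\mu=0$ via a feasible point, then division by $\mu$) is exactly the paper's argument specialized to a single objective. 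What each approach buys: the paper's is self-contained, needing only strict separation plus Lemma \ref{minattain}; yours imports Sion's theorem but delivers a slightly stronger conclusion --- $\bar\delta$ uniform in $\varepsilon$, with only $\bar\lambda$ varying --- and cleanly isolates the scalar statement where polynomiality enters. The one step you leave sketchy is the closedness of $B$: your ``recession-direction'' language describes why the Belousov--Klatte theorem is true rather than how to apply it. The clean execution is the paper's device: given $(s^*,y^*)\in\cl B$, minimize $\left\|(s,y)-(s^*,y^*)\right\|^2$ over $\{(x,s,y) : q(x)\le s,\ g_i(x)\le y_i,\ i\in\mathbb{N}_m\}$, a feasible convex polynomial program bounded below, and apply Lemma \ref{minattain} to conclude that its optimal value $0$ is attained, whence $(s^*,y^*)\in B$; you should state this reduction explicitly rather than appeal to escape directions.
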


\begin{proof}
$(ii) \Rightarrow (i)$ Suppose that for each $\varepsilon>0$, there exist $\bar{\delta}\in\Delta$ and $\bar{\lambda}\in\mathbb{R}^m_+$ such that $\sum_{j=1}^{r}{\bar{\delta}_j p_j} + \sum_{i=1}^{m}{\bar{\lambda}_i g_i} + \varepsilon > 0 $. Then, for any $x\in\mathcal F$ we have
$$ \max\limits_{\delta\in\Delta} \sum\limits_{j=1}^{r}{\delta_j p_j(x)}  + \varepsilon \geq \sum\limits_{j=1}^{r}{\bar{\delta}_j p_j(x)} + \varepsilon \geq \sum\limits_{j=1}^{r}{\bar{\delta}_j p_j(x)} + \sum\limits_{i=1}^{m}{\bar{\lambda}_i g_i(x)} + \varepsilon > 0. $$
Letting $\varepsilon\rightarrow 0$, we see that  $ \max\limits_{j\in \mathbb{N}_r} p_j(x) = \max\limits_{\delta\in\Delta} \sum\limits_{j=1}^{r}{\delta_j p_j} \geq 0$ for all $x\in\mathcal F$.
\smallskip

$(i) \Rightarrow (ii)$ Assume that $(i)$ holds. Let $\varepsilon>0$ be arbitrary and let $f_j:=p_j+\varepsilon$ for all $j\in \mathbb{N}_r$. Then, one has
\begin{equation*}
  \max\limits_{j\in \mathbb{N}_r} f_j(x)  =  \max\limits_{j\in \mathbb{N}_r} \{p_j(x)\} + \varepsilon  > 0  \quad \forall x\in\mathcal F.
\end{equation*}

Now, we will show that the set
\begin{equation*}
G:=\left\{z=\left(\underline{z},\overline{z}\right)\in \mathbb{R}^{r+m} :  \exists\,x \in\mathbb{R}^n \text{ such that } f_j(x) \leq \underline{z}_j, j\in\mathbb{N}_r,  g_i(x) \leq \overline{z}_i, i\in\mathbb{N}_m \right\}
\end{equation*}
is a closed and convex set. As $f_j$ and $g_i$ are all convex polynomials, then $G$ is clearly a convex set. To see that it is closed, let $\{z^k\}_{k\in\mathbb{N}} \subset G$ be such that $\{z^k\} \rightarrow z^*$ as $k \rightarrow \infty$. Then, for each $k\in\mathbb{N}$, there exists $x^k \in \mathbb{R}^n$ such that $f_j(x^k) \leq \underline{z}^k_j$ and $g_i(x^k) \leq \overline{z}^k_i$, for all $j\in\mathbb{N}_r$ and $i\in\mathbb{N}_m$. Now, consider the convex optimization problem
\begin{equation*}
 \begin{array}{ccl}
  (\bar{P})  &  \min\limits_{x\in\mathbb{R}^n,u\in\mathbb{R}^{r+m}} &  \left\|u-z^*\right\|^2   \\
		            & \text{s.t.}         &     f_j(x)-\underline{u}_j \leq 0, j\in\mathbb{N}_r,    \\
								&                     &     g_i(x)-\overline{u}_i \leq 0, i\in\mathbb{N}_m.
	\end{array}
\end{equation*}
Obviously, $0 \leq \inf(\bar{P}) \leq \left\|z^k-z^*\right\|^2 $ for all $k\in\mathbb{N}$. Since $\left\|z^k-z^*\right\|^2 \rightarrow 0$ as $k \rightarrow \infty$, we get $\inf(\bar{P}) = 0$. Moreover, Lemma \ref{minattain} implies that $\inf(\bar{P})$ is attained, and so, there exists $x^* \in \mathbb{R}^n$ such that $f_j(x^*) \leq \underline{z}^*_j$, $j\in\mathbb{N}_r$, and $g_i(x^*) \leq \overline{z}^*_i$, $i\in\mathbb{N}_m$. So $z^* \in G$, and consequently, $G$ is closed.
\smallskip

Since $\max_{j\in \mathbb{N}_r} f_j(x) > 0$ for all $x\in\mathcal F$,  $0\notin G$. Hence, by the strict separation theorem \cite[Theorem 1.1.5]{Za2002}, there exist $v = \left(\underline{v},\overline{v}\right)\in\mathbb{R}^{r+m} \backslash \{0\}$, $\alpha \in \mathbb{R}$ and $\xi>0$ such that
\begin{equation*}
0 = v^T 0 \leq \alpha < \alpha + \xi \leq \underline{v}^T \underline{z} + \overline{v}^T \overline{z}
\end{equation*}
for all $z \in G$. Since $G + \left(\mathbb{R}^r_+ \times \mathbb{R}^m_+\right) \subset G$,  $\underline{v}_j \geq 0$ and $\overline{v}_i \geq 0$, for all $j\in\mathbb{N}_r$ and $i\in\mathbb{N}_m$. Observe that, for each $x \in \mathbb{R}^n$, $(f_1(x),\ldots,f_r(x),g_1(x),\ldots,g_m(x)) \in G$. So, for each $x \in \mathbb{R}^n$,
\begin{equation}
\label{equ:1}
 \sum_{j=1}^{r} \underline{v}_j f_j(x) + \sum_{i=1}^m \overline{v}_i g_i(x) \geq \alpha+\xi \geq \xi > 0 .
\end{equation}
Now, we claim $\underline{v}\in\mathbb{R}^r_+ \backslash\{0\}$. Otherwise,  $\underline{v} = 0$, then we get from \eqref{equ:1} that $\sum_{i=1}^m \overline{v}_i g_i(\bar{x}) > 0$ for any $\bar{x}\in\mathcal F$ (recall that $\mathcal F$ is nonempty). Since $g_i(\bar{x}) \leq 0$ and $\overline{v}_i \geq 0$ for all $i\in\mathbb{N}_m$,  $\sum_{i=1}^m \overline{v}_i g_i(\bar{x}) \leq 0$, which is a contradiction. So, $\kappa :=\sum_{j=1}^{r} \underline{v}_j > 0$. Therefore, \eqref{equ:1} implies that
\begin{equation*}
 \sum\limits_{j=1}^{r}{\bar{\delta}_j f_j(x)} + \sum\limits_{i=1}^{m}{\bar{\lambda}_i g_i(x)} \geq \bar{\xi} > 0
\end{equation*}
for all $x \in \mathbb{R}^n$, where $\bar{\delta}_j:= \kappa^{-1}\underline{v}_j \geq 0$ for all $j\in\mathbb{N}_r$, $\bar{\lambda}_i:=\kappa^{-1}\overline{v}_i \geq 0 $ for all $i\in\mathbb{N}_m$, and $ \bar{\xi}:= \kappa^{-1} \xi > 0$. Since $\sum_{j=1}^{r}\bar{\delta}_j = 1$, we can write
\begin{equation*}
 \sum\limits_{j=1}^{r}{\bar{\delta}_j p_j} + \sum\limits_{i=1}^{m}{\bar{\lambda}_i g_i} + \varepsilon > 0.
\end{equation*}
Thus, the conclusion follows.
\end{proof}


Let $d$ be the smallest even number such that $d\geq \max\{ \max\limits_{j\in \mathbb{N}_r}\deg p_j,  \max\limits_{i\in \mathbb{N}_m}\deg g_i \}$.


\begin{theorem}[{SOS-Convexity \& representation of non-negativity}]
\label{qfreesosconvexeq}
Let $p_j$ and $g_i$ be SOS-convex polynomials for all $j\in \mathbb{N}_r$ and $i\in \mathbb{N}_m$, with $\mathcal F:= \{ x\in\mathbb{R}^n : g_i(x) \leq 0, i\in \mathbb{N}_m\} \neq \emptyset$. Then, the following statements are equivalent:
\begin{enumerate}
\item[\rm($i$)] $g_i(x) \leq 0, \, i\in \mathbb{N}_m \Rightarrow \max\limits_{j\in \mathbb{N}_r} p_j(x) \geq 0 $.
\item[\rm($ii$)] $(\forall \varepsilon>0)$ $(\exists\,\bar{\delta}\in\Delta, \bar{\lambda}\in\mathbb{R}^m_+,\bar{\sigma}\in\Sigma^2_d)$  $\sum\limits_{j=1}^{r}{\bar{\delta}_j p_j}  + \sum\limits_{i=1}^{m}{\bar{\lambda}_i g_i} + \varepsilon = \bar{\sigma}  $.
\end{enumerate}
\end{theorem}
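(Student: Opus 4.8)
The plan is to deduce both implications from the convex characterization already established in Theorem~\ref{qfreeconvexeq}, using Corollary~\ref{polyissos} to upgrade the strict-positivity certificate appearing there into the \emph{exact} sum of squares identity demanded here. This is legitimate because every SOS-convex polynomial is convex, so Theorem~\ref{qfreeconvexeq} applies verbatim to the present data; the only genuinely new ingredient is Corollary~\ref{polyissos}, which is available precisely because we now assume SOS-convexity rather than mere convexity.

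For $(ii)\Rightarrow(i)$ I would argue directly, as in the proof of Theorem~\ref{qfreeconvexeq}. Fix $\varepsilon>0$ and take $\bar\delta\in\Delta$, $\bar\lambda\in\mathbb{R}^m_+$, $\bar\sigma\in\Sigma^2_d$ as in $(ii)$. Since every element of $\Sigma^2_d$ is nonnegative on $\mathbb{R}^n$, the identity gives $\sum_j\bar\delta_j p_j+\sum_i\bar\lambda_i g_i=\bar\sigma-\varepsilon\geq-\varepsilon$ pointwise. For $x\in\mathcal F$ one has $\bar\lambda_i g_i(x)\leq0$, hence $\sum_j\bar\delta_j p_j(x)\geq-\varepsilon$, and since $\bar\delta\in\Delta$ this yields $\max_{j}p_j(x)\geq\sum_j\bar\delta_j p_j(x)\geq-\varepsilon$. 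As $\varepsilon>0$ was arbitrary, letting $\varepsilon\to0$ gives $\max_{j}p_j(x)\geq0$ for all $x\in\mathcal F$, which is $(i)$.

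For $(i)\Rightarrow(ii)$, which carries the real content, I would fix $\varepsilon>0$ and invoke statement $(ii)$ of Theorem~\ref{qfreeconvexeq} to obtain $\bar\delta\in\Delta$ and $\bar\lambda\in\mathbb{R}^m_+$ with
\[
H := \sum_{j=1}^{r}\bar\delta_j p_j + \sum_{i=1}^{m}\bar\lambda_i g_i + \varepsilon > 0 \quad \text{on } \mathbb{R}^n .
\]
The key observation is that $H$ is a \emph{nonnegative SOS-convex} polynomial. Indeed, a nonnegative linear combination of SOS-convex polynomials is SOS-convex: writing $\nabla^2 p_j=P_jP_j^T$ and $\nabla^2 g_i=G_iG_i^T$ for suitable matrix polynomials, the Hessian $\nabla^2 H=\sum_j\bar\delta_j\nabla^2 p_j+\sum_i\bar\lambda_i\nabla^2 g_i$ equals $MM^T$, where $M$ is the matrix polynomial obtained by juxtaposing the blocks $\sqrt{\bar\delta_j}\,P_j$ and $\sqrt{\bar\lambda_i}\,G_i$; the added constant $\varepsilon$ does not affect the Hessian. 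Applying Corollary~\ref{polyissos} to the nonnegative SOS-convex polynomial $H$ produces $\bar\sigma\in\Sigma^2$ with $H=\bar\sigma$. The degree bound then comes for free: $\bar\sigma=H$ has degree at most $\max\{\max_{j}\deg p_j,\max_{i}\deg g_i\}\leq d$, so $\bar\sigma\in\Sigma^2\cap\{\deg\leq d\}=\Sigma^2_d$, and $\sum_j\bar\delta_j p_j+\sum_i\bar\lambda_i g_i+\varepsilon=\bar\sigma$ is exactly the certificate required in $(ii)$.

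The main obstacle is conceptual rather than computational: one must recognize that the positive combination furnished by Theorem~\ref{qfreeconvexeq} is not merely strictly positive but in fact a nonnegative SOS-convex polynomial, so that Corollary~\ref{polyissos} can convert qualitative positivity into a concrete sum of squares equality. The supporting facts — closure of SOS-convexity under nonnegative combinations and the degree bookkeeping that places $\bar\sigma$ in $\Sigma^2_d$ — are routine once this point is in place.
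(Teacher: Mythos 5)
Your proof is correct and follows essentially the same route as the paper: $(ii)\Rightarrow(i)$ by pointwise evaluation on $\mathcal F$ and letting $\varepsilon\to 0$, and $(i)\Rightarrow(ii)$ by invoking Theorem~\ref{qfreeconvexeq} and then upgrading the strictly positive combination to a sum of squares via Corollary~\ref{polyissos}. The only difference is that you explicitly verify the closure of SOS-convexity under nonnegative combinations (via the juxtaposed Hessian factorization $MM^T$) and the degree bookkeeping placing $\bar\sigma$ in $\Sigma^2_d$, details the paper states without proof.
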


\begin{proof}
$(ii) \Rightarrow (i)$ Suppose that for each $\varepsilon>0$, there exist $\bar{\delta}\in\Delta$, $\bar{\lambda}\in\mathbb{R}^m_+$ and $\bar{\sigma}\in\Sigma^2_d$ such that $\sum_{j=1}^{r}{\bar{\delta}_j p_j}   + \sum_{i=1}^{m}{\bar{\lambda}_i g_i}  + \varepsilon = \bar{\sigma}$. Then, for any $x\in\mathcal F$ we have
$$ \max\limits_{\delta\in\Delta} \sum\limits_{j=1}^{r}{\delta_j p_j(x)} + \varepsilon \geq \sum\limits_{j=1}^{r}{\bar{\delta}_j p_j(x)} + \varepsilon = \bar{\sigma} - \sum\limits_{i=1}^{m}{\bar{\lambda}_i g_i(x)} \geq 0. $$
Letting $\varepsilon\rightarrow 0$, we see that  $\max\limits_{j\in \mathbb{N}_r} p_j(x) \geq 0$ for all $x\in\mathcal F$.
\smallskip

$(i) \Rightarrow (ii)$ Assume that $(i)$ holds and let $\varepsilon>0$ arbitrary. Then, by Theorem \ref{qfreeconvexeq}, there exist $\bar{\delta}\in\Delta$ and $\bar{\lambda}\in\mathbb{R}^m_+$ such that
$$L:=\sum\limits_{j=1}^{r}{\bar{\delta}_j p_j} + \sum\limits_{i=1}^{m}{\bar{\lambda}_i g_i} + \varepsilon > 0 .$$
Since $p_j$ and $g_i$ are all SOS-convex polynomials, then $L$ is a (nonnegative) SOS-convex polynomial too. Hence, Corollary \ref{polyissos} ensures that $L$ is a sum of squares polynomial (of degree at most $d$), that is, there exist $\bar{\sigma}\in\Sigma^2_d$ such that
$$ \sum\limits_{j=1}^{r}{\bar{\delta}_j p_j} + \sum\limits_{i=1}^{m}{\bar{\lambda}_i g_i} + \varepsilon  = \bar{\sigma} .$$
Thus, the conclusion follows.
\end{proof}


\section{Duality for Minimax Programs with SOS-convex Polynomials}
\label{SEC3}

In this Section we introduce the dual problem for our minimax model problem and establish duality theorems whenever the functions involved are SOS-convex polynomials.

Consider the minimax programming problem
\begin{equation}
\label{primal}
\begin{array}{ccl}
(P)      &   \inf\limits_{x\in\mathbb{R}^n}  &  \max\limits_{j\in \mathbb{N}_r}\,p_j(x)\\
	       &  \text{s.t.}  &  g_i(x) \leq 0, \ i\in \mathbb{N}_m,
\end{array}
\end{equation}
and its associated dual problem
\begin{equation}
\label{dual}
\begin{array}{ccl}
	(D)     &  \sup    &   \mu \\   
	        &   \text{s.t.}   &   \sum\limits_{j=1}^{r}{\delta_j p_j} + \sum\limits_{i=1}^{m}{\lambda_i g_i} -\mu  \in \Sigma^2_d  \\
			    &          &   \delta\in\Delta, \lambda\in\mathbb{R}^m_+, \mu\in\mathbb{R},
\end{array}
\end{equation}
where $p_j$ and $g_i$ are real polynomials on $\mathbb{R}^n$ for all $j\in \mathbb{N}_r$ and $i\in \mathbb{N}_m$ and $d$ is the smallest even number such that $d\geq \max\{ \max\limits_{j\in \mathbb{N}_r}\deg p_j,  \max\limits_{i\in \mathbb{N}_m}\deg g_i \}$.

It is well known that optimization problems of the form $(D)$ can equivalently be re-formula\-ted as semidefinite programming problem \cite{Lasserre}. See Appendix for details. For instance, consider the quadratic optimization problem $(P^{cq})$ where $p_j$ and $g_i$ are all quadratic functions, that is, $p_j(x) = x^T A_j x + a_j^T x + \alpha_j$ and $g_i(x) = x^T C_i x + c_i^T x + \gamma_i$ for all $x\in\mathbb{R}^n$, with $A_j, C_i \in \mathbb{S}^{n}$, the space of all symmetric $(n\times n)$ matrices, $a_j, c_i \in \mathbb{R}^n$ and $\alpha_j,\gamma_i \in \mathbb{R}$ for all $j\in \mathbb{N}_r$ and $i\in \mathbb{N}_m$, that is
\begin{equation}
\label{eq:0}
\begin{array}{ccl}
   (P^{cq})   &   \inf\limits_{x\in\mathbb{R}^n}  &  \max\limits_{j\in \mathbb{N}_r}\,x^T A_j x + a_j^T x + \alpha_j \\
	            &  \text{s.t.}  &  x^T C_i x + c_i^T x + \gamma_i \leq 0,\ \ i\in \mathbb{N}_m.
\end{array}
\end{equation}
In this case, the sum of squares constraint in its associated dual problem $\sum_{j=1}^{r}{\delta_j p_j} + \sum_{i=1}^{m}{\lambda_i g_i} -\mu  \in \Sigma^2_2$ is equivalent to the inequality $\sum_{j=1}^{r}{\delta_j p_j} + \sum_{i=1}^{m}{\lambda_i g_i} -\mu  \geq 0$. This, in turn (see \cite[p. 163]{bental-nemirovski}), is equivalent to
\begin{equation*}
\begin{pmatrix}
\sum\limits_{j=1}^r\delta_j \alpha_j + \sum\limits_{i=1}^m\lambda_i\gamma_i  -\mu    &     \frac{1}{2} (\sum\limits_{j=1}^r \delta_j a_j^T  + \sum\limits_{i=1}^m\lambda_i c_i^T )  \\
\frac{1}{2} (\sum\limits_{j=1}^r \delta_j a_j  + \sum\limits_{i=1}^m\lambda_i c_i  )  &     \sum\limits_{j=1}^r \delta_j A_j + \sum\limits_{i=1}^m\lambda_i C_i
\end{pmatrix} \succeq 0.
\end{equation*}
Therefore, the dual problem of $(P^{cq})$ becomes
\begin{equation}
\label{eq:1}
\begin{array}{ccl}
	(D^{cq})   &  \sup   &   \mu    \\   
	        &   \text{s.t.}   &   \sum\limits_{j=1}^r\delta_j \begin{pmatrix} 2\alpha_j & a_j^T \\ a_j & 2A_j \end{pmatrix}  + \sum\limits_{i=1}^m \lambda_i \begin{pmatrix}  2\gamma_i & c_i^T  \\  c_i & 2C_i\end{pmatrix} - \mu \begin{pmatrix} 2 & 0 \\ 0 & 0 \end{pmatrix} \succeq 0,      \\
			    &          &   \delta\in\Delta, \lambda\in\mathbb{R}^m_+, \mu\in\mathbb{R},
\end{array}
\end{equation}
which is clearly a semidefinite programming problem.


\begin{lemma}
\label{infsup}
Let $p_j$ and $g_i$ be convex polynomials for all $j\in \mathbb{N}_r$ and $i\in \mathbb{N}_m$, with $\mathcal F:= \{ x\in\mathbb{R}^n : g_i(x) \leq 0, i\in \mathbb{N}_m\} \neq \emptyset$. Then,
\begin{equation} \label{zero:gap:1}
\begin{array}{ccc}
	\inf(P)  &  = & \sup\limits_{\delta\in\Delta , \lambda\in\mathbb{R}^m_+}  \inf\limits_{x\in\mathbb{R}^{n}}  \left\{ \sum\limits_{j=1}^{r}{\delta_j p_j(x)} + \sum\limits_{i=1}^{m}{\lambda_i g_i(x)} \right\} .
\end{array}
\end{equation}
\end{lemma}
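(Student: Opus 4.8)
The plan is to prove \eqref{zero:gap:1} by establishing the two inequalities separately, reserving Theorem~\ref{qfreeconvexeq} for the nontrivial direction. Abbreviate $v:=\inf(P)=\inf_{x\in\mathcal F}\max_{j\in\mathbb N_r}p_j(x)$ and let $d$ denote the right-hand side of \eqref{zero:gap:1}. First I would record weak duality $d\le v$: for any $\delta\in\Delta$, $\lambda\in\mathbb R^m_+$ and any $x\in\mathcal F$, the bounds $g_i(x)\le 0$, $\lambda_i\ge 0$ and $\delta\in\Delta$ give
\[
\inf_{y\in\mathbb R^n}\Big\{\sum_{j=1}^r\delta_j p_j(y)+\sum_{i=1}^m\lambda_i g_i(y)\Big\}\le \sum_{j=1}^r\delta_j p_j(x)+\sum_{i=1}^m\lambda_i g_i(x)\le\sum_{j=1}^r\delta_j p_j(x)\le\max_{j\in\mathbb N_r}p_j(x),
\]
and taking the infimum over $x\in\mathcal F$ followed by the supremum over $(\delta,\lambda)$ yields $d\le v$. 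Since $\mathcal F\ne\emptyset$ forces $v<+\infty$, this also disposes of the degenerate case $v=-\infty$, where $d\le v=-\infty$ settles the identity.

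For the reverse inequality I would assume $v$ finite and exploit a shift of the objective. Setting $\tilde p_j:=p_j-v$, which are again convex polynomials, the very definition of $v$ gives $\max_{j}\tilde p_j(x)=\max_j p_j(x)-v\ge 0$ for every $x\in\mathcal F$; that is, statement~(i) of Theorem~\ref{qfreeconvexeq} holds for the data $\{\tilde p_j\}$ and $\{g_i\}$ (whose feasible set is the same $\mathcal F\ne\emptyset$). Applying the theorem produces statement~(ii): for each $\varepsilon>0$ there are $\bar\delta\in\Delta$ and $\bar\lambda\in\mathbb R^m_+$ with $\sum_j\bar\delta_j(p_j-v)+\sum_i\bar\lambda_i g_i+\varepsilon>0$ on all of $\mathbb R^n$. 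Using $\sum_j\bar\delta_j=1$, this rearranges to $\sum_j\bar\delta_j p_j(x)+\sum_i\bar\lambda_i g_i(x)>v-\varepsilon$ for every $x\in\mathbb R^n$, so $\inf_{x\in\mathbb R^n}\{\sum_j\bar\delta_j p_j(x)+\sum_i\bar\lambda_i g_i(x)\}\ge v-\varepsilon$ and hence $d\ge v-\varepsilon$. Letting $\varepsilon\downarrow 0$ gives $d\ge v$, and together with weak duality this proves $d=v$.

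The argument is short once Theorem~\ref{qfreeconvexeq} is available, and the one idea carrying it is to feed the \emph{shifted} family $p_j-v$ into that dual characterization, so that the $\varepsilon$-approximate certificate in its statement~(ii) converts directly into the reverse bound $d\ge v-\varepsilon$. The step meriting the most attention is the compatibility of the inner infimum in \eqref{zero:gap:1}, taken over all of $\mathbb R^n$, with the inequality in statement~(ii), which likewise holds on all of $\mathbb R^n$; this is precisely why the unconstrained Lagrangian infimum can be bounded below by $v-\varepsilon$ without invoking any constraint qualification, and why no attainment of the primal infimum is needed.
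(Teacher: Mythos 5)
Your proposal is correct and follows essentially the same route as the paper: weak duality via the chain $\inf_{y}\{\cdot\}\le\sum_j\delta_j p_j(x)+\sum_i\lambda_i g_i(x)\le\max_j p_j(x)$ on $\mathcal F$, followed by applying Theorem~\ref{qfreeconvexeq} to the shifted polynomials $p_j-\mu^*$ to obtain the $\varepsilon$-certificate $\sum_j\bar\delta_j p_j+\sum_i\bar\lambda_i g_i>\mu^*-\varepsilon$ and letting $\varepsilon\downarrow 0$. The only (harmless) difference is that you make the handling of the case $\inf(P)=-\infty$ and the role of $\sum_j\bar\delta_j=1$ explicit, which the paper leaves implicit.
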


\begin{proof} Note that, for any $\bar{x}\in\mathcal F$, $\bar{\delta}\in\Delta$ and $\bar{\lambda}\in\mathbb{R}^m_+$, one has
$$ \max\limits_{j\in \mathbb{N}_r} p_j(\bar{x}) \geq \sum\limits_{j=1}^r \bar{\delta}_j p_j(\bar{x}) \geq \sum\limits_{j=1}^r \bar{\delta}_j p_j(\bar{x}) + \sum\limits_{i=1}^m \bar{\lambda}_i g_i(\bar{x}) \geq \inf\limits_{x\in\mathbb{R}^{n}} \left\{ \sum\limits_{j=1}^r \bar{\delta}_j p_j(x) + \sum\limits_{i=1}^m \bar{\lambda}_i g_i(x) \right\}. $$
Therefore, $\inf(P)  \geq \sup_{\delta\in\Delta , \lambda\in\mathbb{R}^m_+}  \inf_{x\in\mathbb{R}^{n}}  \{ \sum_{j=1}^{r}{\delta_j p_j(x)} + \sum_{i=1}^{m}{\lambda_i g_i(x)} \} $.
\smallskip

To see the reverse inequality, we may assume without loss of generality that $\inf(P) > -\infty$, otherwise the conclusion follows immediately. Since $\mathcal F\neq \emptyset$, we have $\mu^*:=\inf(P) \in \mathbb{R}$. Then, for $\varepsilon > 0$ arbitrary, as $\max_{j\in \mathbb{N}_r} \{ p_j(x) -\mu^* \} \geq 0$ for all $x\in\mathcal F$, by Theorem \ref{qfreeconvexeq} we get that there exist $\bar{\delta}\in\Delta$ and $\bar{\lambda}\in\mathbb{R}^m_+$ such that $ \sum_{j=1}^{r}{\bar{\delta}_j p_j}  + \sum_{i=1}^{m}{\bar{\lambda}_i g_i}  > \mu^* - \varepsilon $. Consequently,
$$  \sup\limits_{\delta\in\Delta , \lambda\in\mathbb{R}^m_+}  \inf\limits_{x\in\mathbb{R}^{n}}  \left\{ \sum\limits_{j=1}^{r}{\delta_j p_j(x)} + \sum\limits_{i=1}^{m}{\lambda_i g_i(x)} \right\}  \geq  \mu^* - \varepsilon.$$
Since the above inequality holds for any $\varepsilon>0$, passing to the limit we obtain the desired inequality, which concludes the proof.
\end{proof}


As a consequence of Lemma \ref{infsup}, we derive the following zero-duality gap result for $(P)$.

\begin{theorem}[{Zero duality gap}]
\label{strong:01}
Let $p_j$ and $g_i$ be SOS-convex polynomials for all $j\in \mathbb{N}_r$ and $i\in \mathbb{N}_m$, with $\mathcal F:= \{ x\in\mathbb{R}^n : g_i(x) \leq 0, i\in \mathbb{N}_m\} \neq \emptyset$. Then,
$$\inf(P) = \sup(D).$$
\end{theorem}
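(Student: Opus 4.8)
The plan is to prove the two inequalities $\sup(D)\le\inf(P)$ (weak duality) and $\sup(D)\ge\inf(P)$ separately, the first by a direct feasibility argument and the second by invoking the representation in Theorem \ref{qfreesosconvexeq}.

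First I would dispose of weak duality, which needs no convexity at all. Take any feasible triple $(\delta,\lambda,\mu)$ for $(D)$, so that $\sum_j\delta_j p_j+\sum_i\lambda_i g_i-\mu=\sigma$ for some $\sigma\in\Sigma^2_d$; in particular $\sigma\ge 0$ pointwise. For every $x\in\mathcal F$ one has $\max_{j}p_j(x)\ge\sum_j\delta_j p_j(x)\ge\sum_j\delta_j p_j(x)+\sum_i\lambda_i g_i(x)=\mu+\sigma(x)\ge\mu$, using $\delta\in\Delta$, $\lambda\in\mathbb{R}^m_+$ and $g_i(x)\le 0$. Taking the infimum over $x\in\mathcal F$ and then the supremum over dual-feasible triples gives $\inf(P)\ge\sup(D)$.

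For the reverse inequality I would argue as follows. If $\inf(P)=-\infty$, then weak duality already forces $\sup(D)=-\infty$ and we are done, so assume $\mu^*:=\inf(P)\in\mathbb{R}$ (finite because $\mathcal F\ne\emptyset$). Since $\max_{j}p_j(x)\ge\mu^*$ for all $x\in\mathcal F$, the shifted polynomials $p_j-\mu^*$—which are again SOS-convex, as subtracting a constant leaves the Hessian unchanged—satisfy hypothesis $(i)$ of Theorem \ref{qfreesosconvexeq}: $g_i(x)\le0$ for all $i$ implies $\max_j(p_j(x)-\mu^*)\ge0$. Hence for each $\varepsilon>0$ the theorem yields $\bar\delta\in\Delta$, $\bar\lambda\in\mathbb{R}^m_+$ and $\bar\sigma\in\Sigma^2_d$ with $\sum_j\bar\delta_j(p_j-\mu^*)+\sum_i\bar\lambda_i g_i+\varepsilon=\bar\sigma$. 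Because $\sum_j\bar\delta_j=1$, this rearranges to $\sum_j\bar\delta_j p_j+\sum_i\bar\lambda_i g_i-(\mu^*-\varepsilon)=\bar\sigma\in\Sigma^2_d$, so $(\bar\delta,\bar\lambda,\mu^*-\varepsilon)$ is feasible for $(D)$ and $\sup(D)\ge\mu^*-\varepsilon$. Letting $\varepsilon\downarrow0$ gives $\sup(D)\ge\mu^*=\inf(P)$, and combined with weak duality this is the claimed equality.

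I do not expect a serious obstacle here, since Theorem \ref{qfreesosconvexeq} is built precisely with the $(\forall\varepsilon>0)$ structure needed to manufacture an approximately optimal dual-feasible point; the only points requiring care are the bookkeeping that the constant shift $p_j\mapsto p_j-\mu^*$ preserves SOS-convexity and that the simplex constraint $\sum_j\bar\delta_j=1$ lets the shift pass through as a single constant $-\mu^*$, together with the separate treatment of the $\inf(P)=-\infty$ case. An essentially equivalent route would instead start from Lemma \ref{infsup}, identifying for each fixed $(\delta,\lambda)$ the quantity $\sup\{\mu:\sum_j\delta_j p_j+\sum_i\lambda_i g_i-\mu\in\Sigma^2_d\}$ with $\inf_x\{\sum_j\delta_j p_j(x)+\sum_i\lambda_i g_i(x)\}$ via Corollary \ref{polyissos} (the relevant combination being SOS-convex), and then taking the supremum over $(\delta,\lambda)$; this makes the degree bound $d$ explicit but amounts to the same computation.
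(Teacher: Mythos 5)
Your proof is correct and takes essentially the same approach as the paper: the identical direct weak-duality computation, and for the reverse inequality the same device of shifting $p_j$ by $\mu^*=\inf(P)$ and extracting an $\varepsilon$-approximate dual-feasible point. The only difference is organizational---you invoke Theorem \ref{qfreesosconvexeq} directly, while the paper routes the same two ingredients (Theorem \ref{qfreeconvexeq} plus Corollary \ref{polyissos}) through Lemma \ref{infsup}---an equivalence you yourself note in your closing paragraph.
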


\begin{proof} For any $\bar{x}\in\mathcal F$ and any $\bar{\delta}\in\Delta$, $\bar{\lambda}\in\mathbb{R}^m_+$ and $\bar{\mu}\in\mathbb{R}$ such that $\sum_{j=1}^{r}{\bar{\delta}_j p_j} + \sum_{i=1}^{m}{\bar{\lambda}_i g_i} -\bar{\mu} = \bar{\sigma} \in \Sigma^2_d$, one has
$$ \sum\limits_{j=1}^r \bar{\delta}_j \left(p_j(\bar{x}) - \bar{\mu} \right) = \sum\limits_{j=1}^r \bar{\delta}_j p_j(\bar{x}) - \bar{\mu}  =  \bar{\sigma}(\bar{x}) - \sum\limits_{i=1}^m \bar{\lambda}_i g_i(\bar{x})  \geq  0.$$
Then, there exists $j_0\in \mathbb{N}_r$ such that $p_{j_0}(\bar{x})-\bar{\mu} \geq 0$, and so, $ \bar{\mu} \leq \max\limits_{j\in \mathbb{N}_r} p_j(\bar{x}) $. Thus, $\sup(D) \leq \inf(P)$.
\smallskip

To see the reverse inequality, we may assume without loss of generality that $\inf(P)>-\infty$, otherwise the conclusion follows immediately. Since $\mathcal F\neq \emptyset$, we have $\mu^*:=\inf(P) \in \mathbb{R}$. Then, as a consequence of Lemma \ref{infsup}, for $\varepsilon > 0$ arbitrary we have
$$ \sup\limits_{\delta\in\Delta , \lambda\in\mathbb{R}^m_+, \mu\in\mathbb{R}} \left\{ \mu : \sum\limits_{j=1}^{r}{\delta_j p_j} + \sum\limits_{i=1}^{m}{\lambda_i g_i} -\mu \geq 0 \right\}  \geq   \mu^* - \varepsilon. $$
As $p_j$ and $g_i$ are all SOS-convex polynomials, then $L:=\sum_{j=1}^{r}{\delta_j p_j} + \sum_{i=1}^{m}{\lambda_i g_i} -\mu$ is a SOS-convex polynomial too. So, by Corollary \ref{polyissos}, $L$ is nonnegative if and only if $L\in \Sigma^2_d$. Hence, $\mu^* - \varepsilon \leq \sup(D) $. Since the previous inequality holds for any $\varepsilon>0$, passing to the limit we get $\mu^* \leq \sup(D) $, which concludes the proof.
\end{proof}


We now see that whenever the Slater condition,
$$\left\{x\in\mathbb{R}^n : g_i(x) < 0,  i\in \mathbb{N}_m \right\} \neq \emptyset ,$$
is satisfied strong duality between $(P)$ and $(D)$ holds.


\begin{theorem}[{Strong duality}]
\label{strong:02}
Let $p_j$ and $g_i$ be SOS-convex polynomials for all $j\in \mathbb{N}_r$ and $i\in \mathbb{N}_m$, with $\mathcal F:= \{ x\in\mathbb{R}^n : g_i(x) \leq 0, i\in \mathbb{N}_m\} \neq \emptyset$. If the Slater condition holds, then
$$\inf(P) = \max(D).$$
\end{theorem}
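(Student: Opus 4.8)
\textbf{Proof plan for Theorem \ref{strong:02}.}

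The plan is to upgrade the zero duality gap of Theorem \ref{strong:01} to attainment of the dual supremum, under the Slater condition. By Theorem \ref{strong:01} we already have $\inf(P)=\sup(D)$, so the only new content is that the dual value is \emph{attained}, i.e.\ there exist feasible $\bar\delta\in\Delta$, $\bar\lambda\in\mathbb R^m_+$ and $\bar\mu\in\mathbb R$ realizing $\sup(D)$. As in the proof of Theorem \ref{strong:01}, I may assume $\inf(P)>-\infty$; if $\inf(P)=-\infty$ then $\sup(D)=-\infty$ and the claim is vacuous (the supremum over an empty or unbounded-below set). So set $\mu^*:=\inf(P)\in\mathbb R$, which is finite since $\mathcal F\neq\emptyset$.

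First I would produce the optimal multipliers. Because the $p_j$ are convex polynomials, $\max_{j}p_j$ is convex, and by Lemma \ref{minattain} (applied to $f_0:=\max_j p_j$, which is convex but not polynomial --- so instead I would apply Lemma \ref{minattain} to an epigraphic reformulation, minimizing $t$ subject to $p_j(x)-t\le 0$ and $g_i(x)\le 0$, all convex polynomials in $(x,t)$) the infimum $\mu^*$ is attained at some $\bar x\in\mathcal F$. Under the Slater condition, standard convex programming strong duality (the classical Karush--Kuhn--Tucker / saddle-point theorem for the convex program $\inf\{\max_j p_j(x):g_i(x)\le 0\}$) guarantees the existence of a KKT multiplier $\bar\lambda\in\mathbb R^m_+$ for the inequality constraints together with $\bar\delta\in\Delta$ arising from subdifferentiating the max function at $\bar x$, such that
$$
\sum_{j=1}^r\bar\delta_j p_j(x)+\sum_{i=1}^m\bar\lambda_i g_i(x)\ \ge\ \mu^*\qquad\text{for all }x\in\mathbb R^n,
$$
with complementary slackness $\sum_i\bar\lambda_i g_i(\bar x)=0$ and $\sum_j\bar\delta_j p_j(\bar x)=\max_j p_j(\bar x)=\mu^*$. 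Equivalently, $L:=\sum_j\bar\delta_j p_j+\sum_i\bar\lambda_i g_i-\mu^*$ is a nonnegative polynomial.

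Next I would invoke the SOS-convexity machinery to certify dual feasibility. Since the $p_j$ and $g_i$ are SOS-convex and $\bar\delta_j,\bar\lambda_i\ge 0$, the polynomial $L$ is itself SOS-convex, and it is nonnegative by the displayed inequality. Hence Corollary \ref{polyissos} yields $L\in\Sigma^2$, and since $\deg L\le d$ we in fact get $L\in\Sigma^2_d$, i.e.\ $\sum_j\bar\delta_j p_j+\sum_i\bar\lambda_i g_i-\mu^*\in\Sigma^2_d$. This shows $(\bar\delta,\bar\lambda,\mu^*)$ is feasible for $(D)$, so $\sup(D)\ge\mu^*=\inf(P)$ with the value attained, hence $\max(D)=\mu^*$. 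Combined with $\sup(D)\le\inf(P)$ from Theorem \ref{strong:01}, this gives $\inf(P)=\max(D)$, as required.

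The main obstacle is the first step: rigorously extracting the finite-dimensional multiplier pair $(\bar\delta,\bar\lambda)$. The subtlety is that the objective $\max_j p_j$ is nonsmooth, so one cannot simply write a gradient-based KKT condition; I expect to handle this either through the epigraphic lift to a smooth convex program (introducing the auxiliary variable $t$, after which the Slater point for the $g_i$ still provides a Slater point for the lifted problem) and then applying the standard convex Lagrangian strong duality theorem, or by using the subdifferential sum rule together with the fact that $\partial(\max_j p_j)(\bar x)=\co\{\nabla p_j(\bar x):p_j(\bar x)=\mu^*\}$, which supplies precisely the convex weights $\bar\delta\in\Delta$. Either route requires the Slater condition to rule out a degenerate multiplier and to guarantee that the optimal $\bar\mu$ in $(D)$ can be taken equal to the primal optimum rather than merely approached. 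Once the multipliers are in hand, the passage to an exact SOS certificate via Corollary \ref{polyissos} is immediate and is exactly the mechanism already used in Theorems \ref{qfreesosconvexeq} and \ref{strong:01}.
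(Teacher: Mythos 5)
Your proposal is correct, but it reaches the key intermediate fact --- the existence of $\bar\delta\in\Delta$, $\bar\lambda\in\mathbb{R}^m_+$ with $L:=\sum_j\bar\delta_j p_j+\sum_i\bar\lambda_i g_i-\mu^*\geq 0$ on all of $\mathbb{R}^n$ --- by a genuinely different route than the paper. The paper's proof of Theorem \ref{strong:02} never establishes (or needs) primal attainment: it applies classical Lagrangian duality for the nonsmooth convex program $\inf\{\max_j p_j(x): g_i(x)\le 0\}$, which under Slater gives an attained maximum over $\lambda\in\mathbb{R}^m_+$, and then invokes a convex--concave minimax theorem to swap $\inf_x$ with $\max_{\delta\in\Delta}$ (the function is convex in $x$, linear in $\delta$, and $\Delta$ is compact), so both multipliers come out of a pure duality/minimax argument. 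You instead first prove the primal infimum is attained at some $\bar x\in\mathcal F$ via Lemma \ref{minattain} applied to the epigraphic lift --- exactly the argument the paper records separately as Remark \ref{minimax:attain} --- and then extract $(\bar\delta,\bar\lambda)$ from the KKT/subdifferential conditions $0\in\partial(\max_j p_j)(\bar x)+N_{\mathcal F}(\bar x)$ at the minimizer, with $\bar\delta$ supplied by $\partial(\max_j p_j)(\bar x)=\co\{\nabla p_j(\bar x): p_j(\bar x)=\mu^*\}$. This is essentially the mechanism the paper reserves for Theorem \ref{strong:03} under the (weaker) normal cone condition, specialized here since Slater implies that condition. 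Both routes merge in the final step: $L$ is nonnegative and SOS-convex, hence in $\Sigma^2_d$ by Corollary \ref{polyissos}, so $(\bar\delta,\bar\lambda,\mu^*)$ is dual feasible and the dual value is attained. What each buys: the paper's argument is shorter and works without any attainment discussion; yours additionally produces a primal minimizer, complementary slackness, and the stationarity relations $L(\bar x)=0$, $\nabla L(\bar x)=0$, which would even let you certify $L\in\Sigma^2_d$ directly from Lemma \ref{polysos} without passing through Corollary \ref{polyissos}. One cosmetic remark: your framing "Theorem \ref{strong:01} gives $\inf(P)=\sup(D)$, only attainment is new" is fine, but all you actually use from it is weak duality, which is elementary; and your dismissal of the case $\inf(P)=-\infty$ matches the paper's implicit convention.
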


\begin{proof}
Let $f:=\max_{j\in \mathbb{N}_r} p_j$ and $\mu^* := \inf(P) \in \mathbb{R}$. Thus, since the Slater condition is fulfilled, by the usual convex programming duality and the convex-convave minimax theorem, we get
\begin{equation*}
 \mu^* \leq \inf(P) = \inf\limits_{x\in\mathbb{R}^n}\left\{ f(x) : g_i(x)\leq 0, i\in \mathbb{N}_m\right\} = \max\limits_{\lambda\in\mathbb{R}^m_+} \, \inf\limits_{x\in\mathbb{R}^n} \left\{ f(x) + \sum_{i=1}^m \lambda_i g_i(x) \right\} =
\end{equation*}
\begin{equation*}
 = \max\limits_{\lambda\in\mathbb{R}^m_+} \, \inf\limits_{x\in\mathbb{R}^n} \, \max_{\delta\in\Delta} \left\{ \sum_{i=1}^{r}{\delta_j p_j(x)} + \sum_{i=1}^m \lambda_i g_i(x) \right\} = \max\limits_{\lambda\in\mathbb{R}^m_+, \delta\in\Delta} \, \inf\limits_{x\in\mathbb{R}^n} \left\{ \sum_{i=1}^{r}{\delta_j p_j(x)} + \sum_{i=1}^m \lambda_i g_i(x) \right\}.
\end{equation*}
Hence, there exist $\bar{\lambda}\in \mathbb{R}^m_+$ and $\bar{\delta}\in\Delta$ such that
\begin{equation*}
L:=\sum\limits_{j=1}^{r}{\bar{\delta}_j p_j} + \sum_{i=1}^{m}{\bar{\lambda}_i g_i} -\mu^* \geq 0.
\end{equation*}
As $p_j$ and $g_i$ are all SOS-convex polynomials, $L$ is a (nonnegative) SOS-convex polynomial too, and consequently, in virtue of Corollary \ref{polyissos}, $L$ is a sum of squares polynomial (of degree at most $d$). Hence, $(\bar{\delta},\bar{\lambda},\mu^*)$ is a feasible point of $(D)$, so $\mu^* \leq \sup(D) $. Since weak duality always holds, we conclude $\inf(P) = \max(D)$.
\end{proof}


Recall the minimax quadratic programming problem $(P^{cq})$ introduced in \eqref{eq:0} and its dual problem $(D^{cq})$ given in \eqref{eq:1}. Note that the set of all $(n\times n)$ positive semi-definite matrices is denoted by $\mathbb{S}^{n}_{+}$.

\begin{corollary}
\label{convex:quadratic}
Let $A_j, C_i \in \mathbb{S}^{n}_{+}$, $a_j, c_i \in \mathbb{R}^n$, and $\alpha_j,\gamma_i \in \mathbb{R}$ for all $j\in \mathbb{N}_r$ and $i\in \mathbb{N}_m$. If there exists $\bar{x}\in\mathbb{R}^n$ such that $\bar{x}^T C_i \bar{x} + c_i^T \bar{x} + \gamma_i < 0$ for all $i\in \mathbb{N}_m$, then
$$\inf(P^{cq}) = \max(D^{cq}).$$
\end{corollary}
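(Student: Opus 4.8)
The plan is to view Corollary \ref{convex:quadratic} as the specialization of the strong duality theorem, Theorem \ref{strong:02}, to quadratic data, so that the only real work is to check that the hypotheses of that theorem are met and then to match the abstract dual $(D)$ with the concrete semidefinite program $(D^{cq})$.

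First I would verify that each $p_j$ and each $g_i$ is an SOS-convex polynomial and that the Slater condition holds. Since $p_j(x)=x^T A_j x + a_j^T x + \alpha_j$, its Hessian is the constant matrix $\nabla^2 p_j(x)=2A_j$, and likewise $\nabla^2 g_i(x)=2C_i$. Because $A_j, C_i \in \mathbb{S}^{n}_{+}$, these constant Hessians are positive semidefinite, so each admits a factorization $2A_j = H_j H_j^T$ with $H_j$ a constant (hence trivially polynomial) matrix, and similarly for $2C_i$; thus the Hessian maps are SOS-matrix polynomials and, by Definition 2.1, every $p_j$ and $g_i$ is SOS-convex. Moreover, the hypothesis that there exists $\bar{x}$ with $\bar{x}^T C_i \bar{x} + c_i^T \bar{x} + \gamma_i = g_i(\bar{x}) < 0$ for all $i\in \mathbb{N}_m$ is exactly the Slater condition for $(P^{cq})$, and in particular forces $\mathcal F \neq \emptyset$. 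With SOS-convexity and Slater in hand, Theorem \ref{strong:02} applies verbatim to $(P^{cq})$ and yields $\inf(P^{cq}) = \max(D)$, where $(D)$ is the dual \eqref{dual} associated with $(P^{cq})$ and the relevant degree bound is $d=2$.

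It then remains to identify $\max(D)$ with $\max(D^{cq})$. Here I would invoke the reformulation discussed after \eqref{primal}: for quadratic data the constraint $\sum_{j}\delta_j p_j + \sum_i \lambda_i g_i - \mu \in \Sigma^2_2$ is equivalent to the linear matrix inequality appearing in \eqref{eq:1}. The cleanest justification is to observe that $L := \sum_j \delta_j p_j + \sum_i \lambda_i g_i - \mu$ is itself a convex quadratic, hence SOS-convex, so by Corollary \ref{polyissos} one has $L \in \Sigma^2_2$ if and only if $L \geq 0$ on $\mathbb{R}^n$; and nonnegativity of this quadratic is precisely the stated $(n+1)\times(n+1)$ matrix inequality. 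Consequently $(D)$ and $(D^{cq})$ share the same feasible triples $(\delta,\lambda,\mu)$ and the same objective, so their optimal values and their attainment coincide, giving $\max(D) = \max(D^{cq})$ and hence $\inf(P^{cq}) = \max(D^{cq})$.

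The step I expect to require the most care is the last one: confirming precisely that the $\Sigma^2_2$ condition collapses to plain nonnegativity of $L$ and that this nonnegativity is captured exactly by the displayed $(D^{cq})$ matrix inequality. Everything else, namely the SOS-convexity of convex quadratics and the verification of the Slater condition, is immediate, so the corollary is essentially a bookkeeping translation of Theorem \ref{strong:02} into the quadratic language together with the standard Gram-matrix representation already cited in the text.
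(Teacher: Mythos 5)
Your proof is correct and takes essentially the same approach as the paper: check that the hypothesis is exactly the Slater condition, note that convex quadratics (PSD constant Hessians) are SOS-convex, apply Theorem \ref{strong:02}, and identify $(D)$ with $(D^{cq})$ via the equivalence of the $\Sigma^2_2$ constraint with nonnegativity and the linear matrix inequality already established in the discussion surrounding \eqref{eq:1}. Your use of Corollary \ref{polyissos} to justify that last identification merely spells out what the paper cites from Ben-Tal and Nemirovski, so the two arguments coincide in substance.
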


\begin{proof}
As $A_j, C_i \in \mathbb{S}^{n}_{+}$ for all $j\in \mathbb{N}_r$ and $i\in \mathbb{N}_m$, all the quadratic functions involved in $(P^{cq})$ are convex. Hence, since the Slater condition holds and any convex quadratic function is a SOS-convex polynomial, by applying Theorem \ref{strong:02} we get $\inf(P^{cq}) = \max(D^{cq})$.
\end{proof}


\begin{remark}[{Attainment of the optimal value}]
\label{minimax:attain}
For the problem $(P)$ introduced in \eqref{primal}, note that if $f:=\max\limits_{j\in\mathbb{N}_r}\,p_j$ (which is not a polynomial, in general) is bounded from below on the nonempty set $\mathcal F$, then $f$ attains its minimum on $\mathcal F$. In other words, if $\inf(P) \in \mathbb{R}$, then there exists $x^*\in\mathcal F$ such that $f(x^*)=\min(P)$. To see this, let consider the following convex polynomial optimization problem.
\begin{equation*}
\begin{array}{ccl}
	(P_{e})  &  \inf\limits_{(x,z)\in\mathbb{R}^{n}\times\mathbb{R}}  &  z \\
	          &  \text{s.t.} & p_j(x) -z \leq 0,\ \forall j\in\mathbb{N}_r,  \\
						&              & g_i(x) \leq 0,  \ \forall i\in\mathbb{N}_m.
\end{array}
\end{equation*}
Let $\mathcal F_e$ be the (nonempty) feasible set of $(P_e)$. Observe that $x_0\in\mathcal F$ implies $(x_0,z_0) \in \mathcal F_e$ for all $z_0 \geq f(x_0)$, and conversely, $(x_0,z_0) \in\mathcal F_e$ implies $x_0\in\mathcal F$. Moreover, one has $\inf(P)=\inf(P_{e})$. Thus, Lemma \ref{minattain} can be applied to problem $(P_e)$ and then, there exists $(x^*,z^*)\in \mathcal F_e$ such that $ z^* = \min(P_e)$. Since $z^* \leq z$ for all $(x,z)\in \mathcal F_e$ and $(x,f(x))\in \mathcal F_e$ for all $x\in \mathcal F$, then we get
\begin{equation}
\label{att1}
 z^* \leq f(x) \qquad \forall x\in \mathcal F.
\end{equation}
On the other hand, as $(x^*,z^*)\in \mathcal F_e$ we get $x^*\in \mathcal F$ and
\begin{equation}
\label{att2}
  f(x^*) \leq z^*.
\end{equation}
Combining \eqref{att1} and \eqref{att2} we conclude $f(x^*) \leq f(x)$ for all $x\in \mathcal F$, and so, $x^*$ is a minimizer of $(P)$.
\end{remark}


Recall that the subdifferential of the (convex) function $f$ at $x\in\mathbb{R}^n$ is defined to be the set
$$ \partial f(x):= \left\{ v\in\mathbb{R}^n : f(y) \geq f(x) + v^T(y-x),\ \forall y\in\dom f \right\}.$$
For a convex set $C\subset \mathbb{R}^{n}$, the normal cone of $C$ of at $x\in C$ is given by
$$ N_C(x):=\left\{v\in \mathbb{R}^n : v^T(y-x)\leq 0,\ \forall y\in C\right\}.$$
Let $\mathcal F:= \{ x\in\mathbb{R}^n : g_i(x) \leq 0, i\in \mathbb{N}_m\} \neq \emptyset$. We will say that the \emph{normal cone condition} holds for $\mathcal F$ at $x\in \mathcal F$ provided that
$$ N_\mathcal F(x) = \left\{ \sum_{i=1}^m \lambda_i \nabla g_i(x)  : \lambda \in \mathbb{R}^{m}_{+}, \sum_{i=1}^{m}{\lambda_i g_i(x)} = 0 \right\}.$$
It is known that the normal cone condition holds whenever the Slater condition is satisfied.

\begin{theorem}[{Min-max duality}]
\label{strong:03}
Let $p_j$ and $g_i$ be SOS-convex polynomials for all $j\in \mathbb{N}_r$ and $i\in \mathbb{N}_m$, with $\mathcal F:= \{ x\in\mathbb{R}^n : g_i(x) \leq 0, i\in \mathbb{N}_m\} \neq \emptyset$. Let $x^*\in\mathcal F$ be an optimal solution of $(P)$ and assume that the normal cone condition for $\mathcal F$ at $x^*$ holds. Then,
$$\min(P) = \max(D).$$
\end{theorem}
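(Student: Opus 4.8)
The plan is to exploit the first-order optimality conditions at the optimal solution $x^*$ and to convert them, via the normal cone condition, into a dual-feasible triple that attains the value $\min(P)$. Set $f:=\max_{j\in\mathbb{N}_r} p_j$ and $\mu^*:=\min(P)=f(x^*)$. Since each $p_j$ is a polynomial, $f$ is a finite-valued convex function on all of $\mathbb{R}^n$, hence continuous; therefore the Moreau--Rockafellar sum rule applies and the optimality of $x^*$ for the problem of minimizing $f$ over the convex set $\mathcal F$ is equivalent to
$$ 0 \in \partial f(x^*) + N_{\mathcal F}(x^*). $$

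Next I would compute $\partial f(x^*)$ using the subdifferential formula for a maximum of finitely many smooth convex functions, namely $\partial f(x^*)=\co\{\nabla p_j(x^*):j\in J\}$, where $J:=\{j\in\mathbb{N}_r: p_j(x^*)=\mu^*\}$ is the active index set. The inclusion above then yields a vector $\bar\delta\in\Delta$ with $\bar\delta_j=0$ for $j\notin J$ such that $\sum_{j=1}^r \bar\delta_j \nabla p_j(x^*)\in -N_{\mathcal F}(x^*)$. Invoking the normal cone condition for $\mathcal F$ at $x^*$ converts membership in $-N_{\mathcal F}(x^*)$ into explicit multipliers: there exists $\bar\lambda\in\mathbb{R}^m_+$ with the complementarity relation $\sum_{i=1}^m \bar\lambda_i g_i(x^*)=0$ and
$$ \sum_{j=1}^r \bar\delta_j \nabla p_j(x^*) + \sum_{i=1}^m \bar\lambda_i \nabla g_i(x^*) = 0. $$

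With these multipliers in hand I would form the polynomial $L:=\sum_{j=1}^r \bar\delta_j p_j + \sum_{i=1}^m \bar\lambda_i g_i - \mu^*$ and verify that it vanishes to first order at $x^*$. The gradient identity above gives $\nabla L(x^*)=0$ directly. For the value, since $\bar\delta$ is supported on $J$ we have $\sum_{j=1}^r \bar\delta_j p_j(x^*)=\mu^*\sum_{j=1}^r\bar\delta_j=\mu^*$, and complementarity gives $\sum_{i=1}^m \bar\lambda_i g_i(x^*)=0$, so $L(x^*)=0$. Because $\bar\delta_j,\bar\lambda_i\geq 0$ and the $p_j,g_i$ are SOS-convex (and subtracting a constant does not change the Hessian), $L$ is SOS-convex; hence Lemma \ref{polysos} applies and $L$ is a sum of squares. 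A standard degree count (the leading homogeneous part of a sum of squares cannot cancel) shows the squares have degree at most $d/2$, so $L\in\Sigma^2_d$. Thus $(\bar\delta,\bar\lambda,\mu^*)$ is feasible for $(D)$, giving $\sup(D)\geq\mu^*=\min(P)$; combined with the weak duality inequality $\sup(D)\leq\inf(P)$ established in the proof of Theorem \ref{strong:01}, we obtain $\min(P)=\max(D)$ with the dual supremum attained.

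The main obstacle is the bookkeeping that forces $L(x^*)=0$: this is exactly where the active-set structure of $\partial f(x^*)$ (so that $\bar\delta$ is supported on $J$) and the complementarity built into the normal cone condition must be used in tandem. Everything else is the same SOS-convex machinery used in Theorems \ref{strong:01} and \ref{strong:02}; the novelty is only that the normal cone condition, being weaker than Slater, replaces the convex--concave minimax argument by a direct first-order computation.
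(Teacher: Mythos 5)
Your proposal is correct and follows essentially the same route as the paper's proof: first-order optimality $0\in\partial f(x^*)+N_{\mathcal F}(x^*)$, the max-function subdifferential formula (the paper cites \cite[Proposition 2.3.12]{Clarke}) together with the normal cone condition to produce $(\bar\delta,\bar\lambda)$ with $\bar\delta$ supported on the active indices and complementarity, then $L(x^*)=0$, $\nabla L(x^*)=0$, SOS-convexity of $L$, Lemma \ref{polysos}, and weak duality. Your added details (the Moreau--Rockafellar justification and the degree count for $L\in\Sigma^2_d$) are points the paper leaves implicit, but the argument is the same.
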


\begin{proof}
Let $f:=\max_{j\in \mathbb{N}_r} p_j$ and $\mu^* := \min(P) \in \mathbb{R}$. If $x^*\in\mathcal F$ is an optimal solution of $(P)$, that is, $f(x^*) = \mu^*$, then by optimality conditions we have $0\in \partial f(x^*) + N_{\mathcal F}(x^*)$. As a consequence of the normal cone condition for $\mathcal F$ at $x^*$ and \cite[Proposition 2.3.12]{Clarke}, we get
$$ 0 = \sum\limits_{j=1}^{r}{\bar{\delta}_j \nabla p_j(x^*)} + \sum\limits_{i=1}^m \bar{\lambda}_i \nabla g_i(x^*) $$
for some $\bar{\lambda}\in\mathbb{R}^m_+$ with $\bar{\lambda}_i g_i(x^*) = 0$ for all $i\in \mathbb{N}_m$, and $\bar{\delta}\in \Delta$ with $\bar{\delta}_j=0$ for those $j\in\mathbb{N}_r$ such that $p_j(x^*) \neq \mu^*$. Note that the polynomial
\begin{equation*}
L :=\sum\limits_{j=1}^{r}{\bar{\delta}_j p_j}  + \sum\limits_{i=1}^m \bar{\lambda}_i g_i - \mu^*
\end{equation*}
satisfies $L(x^*)=0$ and $\nabla L(x^*)=0$. Moreover, $L$ is a SOS-convex polynomial since $p_j$ and $g_i$ are all SOS-convex polynomials. Then, as a consequence of Lemma \ref{polysos}, $L$ is a sum of squares polynomial (of degree at most $d$). Then, $(\bar{\delta},\bar{\lambda},\mu^*)$ is a feasible point of $(D)$, so $\mu^* \leq \sup(D) $. Since weak duality always holds, we conclude $\min(P) = \max(D)$.
\end{proof}


It is worth noting that, in the case where $r=1$, our min-max duality Theorem \ref{strong:03} collapses to the corresponding strong duality Theorem 4.1 shown in \cite{jeya-li-siam}.


The following simple example illustrates the above min-max duality therorem.

\begin{example}
Consider the optimization problem
\begin{equation*}
   (P_1) \quad \min\limits_{x\in\mathbb{R}} \left\{ \max\{ 2x^4-x, 5x^2+x \} : x\geq -2 \right\}.
\end{equation*}
It is easy to check that $x^*=0$ is a minimizer of $(P_1)$ and $\min(P_1)=0$. The corresponding dual problem of $(P_1)$ is
\begin{equation*}
   (D_1) \quad \max\limits_{\delta\geq 0,\lambda \geq 0, \mu\in\mathbb{R}} \left\{ \mu :  \delta(2x^4-x) + (1-\delta)(5x^2+x) - \lambda (x+2) - \mu \in \Sigma^2_4 \right\}
\end{equation*}
As $ x^4 + \frac{5}{2}x^2 \in \Sigma^2_4 $,  $\delta = \frac{1}{2}$, $\lambda=0$ and $\mu =0$ is a feasible point of $(D_1)$. So, $\sup(D_1) \geq 0$. On the other hand, the sum of squares constraint in $(D_1)$ gives us $-2\lambda -\mu \geq 0$. Consequently, $\mu \leq -2\lambda \leq 0$, which implies $\max(D) = 0$.
\end{example}


\section{Applications to Robust Optimization \& Rational Programs}
\label{SEC4}


In this Section, we provide applications of our duality theorems to robust SOS-convex programming problems under data uncertainty and to rational programming problems.

Let us consider the following optimization program with the data uncertainty in the constraints and in the objective function.
\begin{equation*}
\begin{array}{ccl}
	(UP)  &   \inf & f_0(x,v_0) \\
	       &  \text{s.t.} & f_i(x,v_i) \leq 0,\ \forall i=1,\ldots,k,
\end{array}
\end{equation*}
where, for each $i\in\{0\}\cup\mathbb{N}_k$, $v_i$ is an uncertain parameter and $v_i \in \mathcal V_i$ for some $\mathcal V_i \subset \mathbb{R}^{n_i}$. The robust counterpart of $(UP)$, which finds a robust solution to $(UP)$ that is immunized against all the possible uncertain scenarios, is given by
\begin{equation*}
\begin{array}{ccl}
(RP)   &   \inf  &  \sup\limits_{v_0\in \mathcal V_0}  f_0(x,v_0) \\
	       &  \text{s.t.}  &  f_i(x,v_i) \leq 0,\ \forall v_i\in\mathcal V_i, \forall i=1,\ldots,k.
\end{array}
\end{equation*}


\begin{theorem}[{Finite data uncertainty}]
Let $f_i(\cdot,v_i)$ be a SOS-convex polynomial for each $v_i\in \mathcal V_i :=\{v_i^1,\ldots,v_i^{s_i} \}$ and each $i\in\{0\}\cup\mathbb{N}_k$ and let $r:=s_0$. Assume that there exists $\bar{x}\in\mathbb{R}^n$ such that $f_i(\bar{x},v_i^j) < 0$ for all $j\in\mathbb{N}_{s_i}$ and $i\in\mathbb{N}_k$. Then $\inf(RP) = \max(RD)$, where
\begin{equation}
\label{concl:robust}
\begin{array}{ccl}
       (RD) &  \sup & \mu  \\   
            & \text{s.t.}   &  \sum\limits_{l=1}^{r}{\delta_l f_0(\cdot,v_0^l)} + \sum\limits_{i=1}^{k}{\sum\limits_{j=1}^{s_i}{ \lambda_i^j f_i(\cdot,v_i^j) }} - \mu \in \Sigma^2_t    \\
						&               &  \delta \in \Delta, \lambda_i\in\mathbb{R}^{s_i}_{+}\ (\forall i\in\mathbb{N}_k), \mu \in \mathbb{R},
\end{array}
\end{equation}
and $t$ is the smallest even number such that $t\geq \max\{ \max\limits_{l\in \mathbb{N}_{r}}\deg f_0(\cdot,v_0^l), \max\limits_{i\in \mathbb{N}_k}\max\limits_{j\in \mathbb{N}_{s_i}}\deg f_i(\cdot,v_i^j)  \}$.
\end{theorem}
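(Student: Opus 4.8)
The plan is to recognize $(RP)$ as a particular instance of the minimax model $(P)$ from \eqref{primal} and then invoke the strong duality result of Theorem \ref{strong:02}. Since each uncertainty set $\mathcal V_i$ is finite, the robust objective reduces to a finite maximum, $\sup_{v_0\in\mathcal V_0} f_0(x,v_0) = \max_{l\in\mathbb{N}_r} f_0(x,v_0^l)$ with $r=s_0$, and each semi-infinite constraint ``$f_i(x,v_i)\le 0$ for all $v_i\in\mathcal V_i$'' collapses to the finitely many inequalities $f_i(x,v_i^j)\le 0$, $j\in\mathbb{N}_{s_i}$, $i\in\mathbb{N}_k$.

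First I would set up the correspondence explicitly. Define $p_l:=f_0(\cdot,v_0^l)$ for $l\in\mathbb{N}_r$, and let the constraint family be the collection of all polynomials $f_i(\cdot,v_i^j)$ as $(i,j)$ ranges over $i\in\mathbb{N}_k$ and $j\in\mathbb{N}_{s_i}$, so that the total number of constraints is $m=\sum_{i=1}^k s_i$. With this relabelling, $(RP)$ is precisely an instance of $(P)$, its feasible set $\mathcal F=\{x\in\mathbb{R}^n : f_i(x,v_i^j)\le 0,\ \forall i\in\mathbb{N}_k,\ \forall j\in\mathbb{N}_{s_i}\}$ coincides with the feasible set of $(P)$, and the degree parameter $t$ in the statement is exactly the $d$ attached to $(D)$ for this choice of polynomials.

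Next I would verify the hypotheses of Theorem \ref{strong:02}. By assumption each $f_i(\cdot,v_i)$ is SOS-convex for every $v_i\in\mathcal V_i$, hence all the $p_l$ and all the constraint polynomials are SOS-convex. The existence of $\bar x\in\mathbb{R}^n$ with $f_i(\bar x,v_i^j)<0$ for all $j\in\mathbb{N}_{s_i}$, $i\in\mathbb{N}_k$, is precisely the Slater condition for $\mathcal F$; in particular $\mathcal F\neq\emptyset$. Applying Theorem \ref{strong:02} therefore gives $\inf(RP)=\max(D)$, where $(D)$ is the dual associated with the relabelled data.

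Finally I would check that this dual $(D)$ is literally the problem $(RD)$ of \eqref{concl:robust}: the single multiplier vector $\lambda\in\mathbb{R}^m_+$ on the stacked constraints decomposes back into blocks $\lambda_i\in\mathbb{R}^{s_i}_+$, the convex-combination weights are exactly $\delta\in\Delta$, and the sum-of-squares constraint in $(D)$ becomes the one displayed in $(RD)$. This last bookkeeping is the only point requiring care; there is no substantive analytic obstacle, since all of the duality content is already carried by Theorem \ref{strong:02}.
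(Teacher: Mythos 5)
Your proposal is correct and follows exactly the paper's own route: rewrite $(RP)$ as the finite minimax problem with objective $\max_{l\in\mathbb{N}_r} f_0(\cdot,v_0^l)$ and stacked constraints $f_i(\cdot,v_i^j)\le 0$, note that the hypothesis on $\bar{x}$ is the Slater condition, and invoke Theorem \ref{strong:02}, whose dual for the relabelled data is precisely $(RD)$. The only difference is that you spell out the relabelling bookkeeping, which the paper leaves implicit.
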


\begin{proof}
It is easy to see that problem $(RP)$ is equivalent to
\begin{equation}
\label{dual:aux:rob}
\begin{array}{ccl}
(RP_e)   &  \inf  &  \max\limits_{j\in \mathbb{N}_{r}}  f_0(x,v_0^j) \\
	       &  \text{s.t.}  &  f_i(x,v_i^j) \leq 0,\ \forall j\in\mathbb{N}_{s_i}, \forall i=1,\ldots,k.
\end{array}
\end{equation}
Since the Slater condition holds, by applying Theorem \ref{strong:02} we get $\inf(RP_e) = \max(RD)$.
\end{proof}


\begin{theorem}[{Polytopic data uncertainty}]
Suppose that, for each $i\in\{0\}\cup\mathbb{N}_k$, $x \mapsto f_i(x,v_i)$ is a SOS-convex polynomial for each $v_i \in \mathcal V_i:=\co\{v_i^1,\ldots,v_i^{s_i} \}$ with $r:=s_0$, and $v_i \mapsto f_i(x,v_i)$ is affine for each $x\in\mathbb{R}^{n}$. Assume there exists $\bar{x}\in\mathbb{R}^n$ such that $f_i(\bar{x},v_i^j) < 0$ for all $j\in\mathbb{N}_{s_i}$ and $i\in\mathbb{N}_k$. Then, $\inf(RP) = \max(RD)$ where the problem $(RD)$ is defined in \eqref{concl:robust}.
\end{theorem}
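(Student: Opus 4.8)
The plan is to reduce the polytopic uncertainty problem $(RP)$ to the finite-uncertainty problem $(RP_e)$ of the previous theorem, exploiting the affine dependence of each $f_i$ on the uncertain parameter $v_i$, and then to invoke the strong duality result already established in Theorem \ref{strong:02}.

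First I would record the elementary fact that an affine function on a polytope attains its extrema at the vertices. Fix $x\in\mathbb{R}^n$ and $i\in\{0\}\cup\mathbb{N}_k$. Since $v_i\mapsto f_i(x,v_i)$ is affine and $\mathcal V_i=\co\{v_i^1,\ldots,v_i^{s_i}\}$, any $v_i\in\mathcal V_i$ can be written as $v_i=\sum_{j=1}^{s_i}\mu_j v_i^j$ with $\mu_j\ge 0$ and $\sum_{j=1}^{s_i}\mu_j=1$, whence $f_i(x,v_i)=\sum_{j=1}^{s_i}\mu_j f_i(x,v_i^j)\le \max_{j\in\mathbb{N}_{s_i}} f_i(x,v_i^j)$. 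This yields two consequences: for the objective,
$$ \sup_{v_0\in\mathcal V_0} f_0(x,v_0)=\max_{l\in\mathbb{N}_r} f_0(x,v_0^l), $$
and for the constraints,
$$ \bigl(f_i(x,v_i)\le 0\ \forall v_i\in\mathcal V_i\bigr) \iff \bigl(f_i(x,v_i^j)\le 0\ \forall j\in\mathbb{N}_{s_i}\bigr). $$

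These two equivalences show that $(RP)$ has exactly the same feasible set and the same objective as the finite-uncertainty problem $(RP_e)$ defined in \eqref{dual:aux:rob}, so $\inf(RP)=\inf(RP_e)$. Next I would observe that the hypotheses transfer correctly: each vertex $v_i^j$ lies in $\mathcal V_i$, so $x\mapsto f_i(x,v_i^j)$ is SOS-convex by assumption, and the point $\bar x$ with $f_i(\bar x,v_i^j)<0$ for all $i\in\mathbb{N}_k$ and $j\in\mathbb{N}_{s_i}$ supplies the Slater condition for $(RP_e)$. Applying Theorem \ref{strong:02} to $(RP_e)$ then gives $\inf(RP_e)=\max(RD)$, and combining with $\inf(RP)=\inf(RP_e)$ completes the argument.

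The only genuine content is the vertex-reduction step, which rests entirely on the affinity of $v_i\mapsto f_i(x,v_i)$; once the polytopic problem is rewritten over the finite vertex set, the statement becomes an immediate instance of the finite-uncertainty theorem and hence of the strong duality Theorem \ref{strong:02}. I do not expect any real obstacle beyond carefully justifying that the supremum over $\mathcal V_0$ is actually attained at a vertex, so that the objective is genuinely a maximum of finitely many SOS-convex polynomials and the assignment $r:=s_0$ is the correct cardinality for the dual $(RD)$.
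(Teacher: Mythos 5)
Your proposal is correct and follows essentially the same route as the paper: both reduce $(RP)$ to the finite-uncertainty problem $(RP_e)$ by using affinity of $v_i\mapsto f_i(x,v_i)$ to replace the polytope $\mathcal V_i$ by its vertices (for both the constraints and the objective), and then apply the strong duality Theorem \ref{strong:02} under the Slater condition supplied by $\bar x$. Your write-up merely makes the vertex-reduction step more explicit than the paper does.
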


\begin{proof}
Let $i\in\mathbb{N}_k$. As $f_i(x,\cdot)$ is affine for each $x\in\mathbb{R}^{n}$, then $f_i(x,v_i) \leq 0$ for all $v_i\in\mathcal V_i:=\co\{v_i^1,\ldots,v_i^{s_i}\}$ if and only if $f_i(x,v_i^j) \leq 0$ for all $j\in\mathbb{N}_{s_i}$. Moreover, we see that
$$ \sup\limits_{v_0\in \mathcal V_0}  f_0(x,v_0)  = \max\limits_{j\in \mathbb{N}_{r}} f_0(x,v_0^j) .$$
Hence, problem $(RP)$ is equivalent to $(RP_e)$ introduced in \eqref{dual:aux:rob}. Reasoning as in the proof of the above theorem we conclude $\inf(RP) = \max(RD)$.
\end{proof}

\medskip


Now, consider the following minimax rational programming problem,
\begin{equation*}
\begin{array}{ccl}
(\mathcal P)   &   \inf\limits_{x\in\mathbb{R}^n}  &  \max\limits_{j\in \mathbb{N}_r}\,\frac{p_j(x)}{q(x)}\\
	       &  \text{s.t.}  &  g_i(x) \leq 0,\ \ i\in \mathbb{N}_m.
\end{array}
\end{equation*}
where $p_j$, for $j\in \mathbb{N}_r$, $q$, and $g_i$, for $i\in \mathbb{N}_m$, are real polynomials on $\mathbb{R}^n$, and for each $j\in \mathbb{N}_r$, $p_j(x)\geq 0$ and $q(x) > 0 $ over the feasible set. This is a generalization of problem $(P)$ introduced in \eqref{primal}. For related minimax fractional programs, see \cite{crouzeix1,Lai99}. Minimax fractional programs often appear in resource allocation and planning problems of management science where the objective function in their optimization problems involve ratios such as cost or profit in time, return on capital and earnings per share (see \cite{ibraki1}).


We associate with $(\mathcal P)$ the following SDP dual problem

\begin{equation}
\label{dualfrac}
\begin{array}{ccl}
	(\mathcal D)   &  \sup   &   \mu \\  
	        &   \text{s.t.}   &   \sum\limits_{j=1}^{r}{\delta_j p_j} + \sum\limits_{i=1}^{m}{\lambda_i g_i} -\mu q \in\Sigma^2_d  \\
			    &          &   \delta\in\Delta, \lambda\in\mathbb{R}^m_+, \mu\in\mathbb{R},
\end{array}
\end{equation}
where $d$ is the smallest even number such that $d\geq \max\{ \deg q, \max\limits_{j\in \mathbb{N}_r}\deg p_j,  \max\limits_{i\in \mathbb{N}_m}\deg g_i \}$.


It is worth noting that, in general, problem $(\mathcal P)$ may not attain its optimal value when it is finite, even when $r=1$. To see this, consider the rational programming problem $(\mathcal P_1)$ $\inf_{x\in\mathbb{R}}\left\{ \frac{1}{x} : 1-x \leq 0\right\}$. Obviously, $\inf(\mathcal P_1) = 0$, however, for any feasible point $x$, one has $\frac{1}{x}>0$. Thus, the optimal value of $(\mathcal P_1)$ is not attained.


\begin{theorem}[{Strong duality for minimax rational programs}]
\label{strong:frac}
Let $p_j$, $g_i$ and $-q$ be SOS-convex polynomials for all $j\in \mathbb{N}_r$ and $i\in \mathbb{N}_m$, such that $p_j(x)\geq 0$ and $q(x) > 0 $ for all $x\in \mathcal F:= \{ x\in\mathbb{R}^n : g_i(x) \leq 0, i\in \mathbb{N}_m\} \neq \emptyset$. If the Slater condition holds, then
$$\inf(\mathcal P) = \max(\mathcal D).$$
\end{theorem}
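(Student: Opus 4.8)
The plan is to reduce the minimax \emph{rational} program $(\mathcal P)$ to the minimax \emph{polynomial} program already treated in Theorem~\ref{strong:02}, by means of a parametric (Dinkelbach-type) shift that exploits the positivity of $q$ on $\mathcal F$.

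First I would isolate two elementary observations about $\mu^*:=\inf(\mathcal P)$. Since $\mathcal F\neq\emptyset$ and $p_j\geq 0$, $q>0$ on $\mathcal F$, for every feasible $x$ the value $\max_{j\in\mathbb N_r}p_j(x)/q(x)$ is finite and nonnegative, whence $\mu^*\in[0,+\infty)$; the nonnegativity $\mu^*\geq 0$ is the decisive point. Because $-q$ is SOS-convex (and each $p_j$ is SOS-convex), the shifted objectives $\tilde p_j:=p_j-\mu^* q = p_j+\mu^*(-q)$ are nonnegative combinations of SOS-convex polynomials and hence are themselves SOS-convex --- this is exactly where $\mu^*\geq 0$ is needed. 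Moreover, since $q(x)>0$ on $\mathcal F$, the defining inequality $\max_{j}p_j(x)/q(x)\geq\mu^*$ (which holds for all $x\in\mathcal F$ by definition of the infimum) is equivalent to $\max_{j}(p_j(x)-\mu^* q(x))\geq 0$, that is $\max_j\tilde p_j(x)\geq 0$ for all $x\in\mathcal F$.

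Next I would apply Theorem~\ref{strong:02} to the auxiliary polynomial minimax program $(\widetilde P)$ given by $\inf_x\max_{j}\tilde p_j(x)$ subject to $g_i(x)\leq 0$, $i\in\mathbb N_m$. Its data $\tilde p_j,g_i$ are SOS-convex and the Slater condition is inherited from $(\mathcal P)$, so Theorem~\ref{strong:02} produces $\bar\delta\in\Delta$, $\bar\lambda\in\mathbb R^m_+$ with $\sum_j\bar\delta_j\tilde p_j+\sum_i\bar\lambda_i g_i-\nu^*\in\Sigma^2_d$, where $\nu^*:=\inf(\widetilde P)$. The previous paragraph gives $\nu^*\geq 0$, so adding the degree-zero square $\nu^*$ back keeps the left-hand side a sum of squares: $\sum_j\bar\delta_j\tilde p_j+\sum_i\bar\lambda_i g_i\in\Sigma^2_d$. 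Substituting $\tilde p_j=p_j-\mu^* q$ and using $\sum_j\bar\delta_j=1$ rewrites this as $\sum_j\bar\delta_j p_j+\sum_i\bar\lambda_i g_i-\mu^* q\in\Sigma^2_d$, so $(\bar\delta,\bar\lambda,\mu^*)$ is feasible for $(\mathcal D)$ and therefore $\max(\mathcal D)\geq\mu^*$.

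It remains to check weak duality $\sup(\mathcal D)\leq\inf(\mathcal P)$, which both closes the argument and promotes the inequality to attainment of the dual maximum. For any dual-feasible $(\delta,\lambda,\mu)$ and any $x\in\mathcal F$, writing $\sum_j\delta_j p_j(x)+\sum_i\lambda_i g_i(x)-\mu q(x)=\sigma(x)\geq 0$ and using $\lambda_i g_i(x)\leq 0$ gives $\sum_j\delta_j p_j(x)\geq\mu q(x)$; dividing by $q(x)>0$ and bounding the convex combination $\sum_j\delta_j p_j(x)/q(x)$ by its maximum yields $\mu\leq\max_j p_j(x)/q(x)$ for every $x\in\mathcal F$, hence $\mu\leq\inf(\mathcal P)$. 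The main obstacle --- and the only place the hypotheses $p_j\geq 0$, $q>0$ on $\mathcal F$ and the SOS-convexity of $-q$ are genuinely used --- is guaranteeing $\mu^*\geq 0$ so that the shifted objectives remain SOS-convex; once this linearization is in place, the remainder is a faithful transcription of the non-fractional strong-duality argument.
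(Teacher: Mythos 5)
Your proposal is correct and follows essentially the same route as the paper: a Dinkelbach-type reduction to the auxiliary polynomial minimax program with objectives $p_j-\mu^* q$ (SOS-convex because $\mu^*\geq 0$ and $-q$ is SOS-convex), an application of Theorem~\ref{strong:02} under the inherited Slater condition, absorption of the nonnegative optimal value of the auxiliary problem into the sum-of-squares certificate, and weak duality to close the argument. The only differences are expository --- you spell out the weak duality computation and the closure of SOS-convexity under nonnegative combinations, which the paper leaves implicit.
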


\begin{proof}
Note that for any $\mu\in\mathbb{R}_+$, one has $\inf(\mathcal P)\geq \mu$ if and only if $\inf(P_\mu)\geq 0$, where
\begin{equation}
 \label{aux:problem}
 (P_{\mu}) \quad \inf\limits_{x\in\mathcal F}\,\max\limits_{j\in \mathbb{N}_r}\,\{p_j(x)-\mu q(x)\}.
\end{equation}
By the assumption, $\inf(\mathcal P)$ is finite. So, it follows easily that $\mu^* := \inf(\mathcal P) \in\mathbb{R}_+ $ and then $\inf(P_{\mu^*}) \geq 0$. Since, for each $j\in\mathbb{N}_r$, $p_j - \mu^* q$ is a SOS-convex polynomial and the Slater condition holds, by Theorem \ref{strong:02} we have that $\inf(P_{\mu^*}) = \max(D_{\mu^*})$ where
\begin{equation}
\begin{array}{ccl}
	(D_{\mu^*})   &  \sup   &   \theta \\  
	        &   \text{s.t.}   &   \sum\limits_{j=1}^{r}{\delta_j p_j} + \sum\limits_{i=1}^{m}{\lambda_i g_i} -\mu^* q -\theta \in\Sigma^2_d  \\
			    &          &   \delta\in\Delta, \lambda\in\mathbb{R}^m_+, \theta\in\mathbb{R}.
\end{array}
\label{ax:dl}
\end{equation}
As $\max(D_{\mu^*})=\inf(P_{\mu^*}) \geq 0$, there exist $\bar{\delta}\in\Delta$, $\bar{\lambda}\in \mathbb{R}^m_+$ and $\bar{\theta}\in \mathbb{R}_+$ such that
$$ \sum\limits_{j=1}^{r}{\bar{\delta}_j p_j} + \sum_{i=1}^{m}{\bar{\lambda}_i g_i} -\mu^*q \in (\bar{\theta} + \Sigma^2_d) \subset \Sigma^2_d. $$
Therefore, $(\bar{\delta},\bar{\lambda},\mu^*)$ is a feasible point of $(\mathcal D)$, so $\mu^* \leq \sup(\mathcal D) $. Since weak duality always holds, we conclude $\inf(\mathcal P) = \max(\mathcal D)$.
\end{proof}


Let us consider the particular problem $(\mathcal P^{cq})$ where $p_j$, $q$ and $g_i$ are all quadratic functions, that is, $p_j(x) = x^T A_j x + a_j^T x + \alpha_j$, $q(x) = x^T B x + b^T x + \beta$ and $g_i(x) = x^T C_i x + c_i^T x + \gamma_i$ for all $x\in\mathbb{R}^n$, with $A_j, B, C_i \in \mathbb{S}^{n}$, $a_j, b, c_i \in \mathbb{R}^n$ and $\alpha_j,\beta,\gamma_i \in \mathbb{R}$ for all $j\in \mathbb{N}_r$ and $i\in \mathbb{N}_m$, that is
\begin{equation*}
\begin{array}{ccl}
   (\mathcal P^{cq})   &   \inf\limits_{x\in\mathbb{R}^n}  &  \max\limits_{j\in \mathbb{N}_r}\,\displaystyle\frac{x^T A_j x + a_j^T x + \alpha_j}{x^T B x + b^T x + \beta} \\
	            &  \text{s.t.}  &  x^T C_i x + c_i^T x + \gamma_i \leq 0,\ \ i\in \mathbb{N}_m.
\end{array}
\end{equation*}
Assume that $p_j(x)\geq 0$ and $q(x) > 0 $ over the feasible set.
The dual problem of $(\mathcal P^{cq})$ is given by
\begin{equation*}
\begin{array}{ccl}
	(\mathcal D^{cq})   &  \sup   &   \mu    \\   
	        &   \text{s.t.}   &   \sum\limits_{j=1}^r\delta_j \begin{pmatrix} 2\alpha_j & a_j^T \\ a_j & 2A_j \end{pmatrix}  + \sum\limits_{i=1}^m \lambda_i \begin{pmatrix}  2\gamma_i & c_i^T  \\  c_i & 2C_i\end{pmatrix} - \mu \begin{pmatrix} 2\beta & b^T \\ b & 2B \end{pmatrix} \succeq 0,      \\
										&          &   \delta\in\Delta, \lambda\in\mathbb{R}^m_+, \mu\in\mathbb{R},
\end{array}
\end{equation*}
which is clearly a semidefinite programming problem.


\begin{corollary}
\label{convex:quadratic:2}
Let consider the problem $(\mathcal P^{cq})$ such that $A_j, -B, C_i \in \mathbb{S}^{n}_{+}$ for all $j\in \mathbb{N}_r$ and $i\in \mathbb{N}_m$. If there exists $\bar{x}\in\mathbb{R}^n$ such that $\bar{x}^T C_i \bar{x} + c_i^T \bar{x} + \gamma_i < 0$ for all $i\in \mathbb{N}_m$, then
$$\inf(\mathcal P^{cq}) = \max(\mathcal D^{cq}).$$
\end{corollary}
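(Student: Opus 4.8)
The plan is to recognize Corollary \ref{convex:quadratic:2} as the quadratic specialization of Theorem \ref{strong:frac}, in direct analogy with the way Corollary \ref{convex:quadratic} specializes Theorem \ref{strong:02}. First I would check that each function entering $(\mathcal P^{cq})$ satisfies the SOS-convexity hypotheses required by Theorem \ref{strong:frac}. Since $A_j, C_i \in \mathbb{S}^{n}_{+}$, the quadratics $p_j$ and $g_i$ are convex, and because every convex quadratic is a SOS-convex polynomial (as noted after Definition 2.1), both $p_j$ and $g_i$ are SOS-convex. For the denominator, observe that $-q(x) = x^T(-B)x - b^T x - \beta$, so the hypothesis $-B \in \mathbb{S}^{n}_{+}$ makes $-q$ a convex quadratic, hence $-q$ is SOS-convex as well. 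Thus the triple $(p_j, g_i, -q)$ meets the SOS-convexity requirement of Theorem \ref{strong:frac}.

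Next I would confirm the remaining hypotheses. The assumed existence of $\bar{x}$ with $\bar{x}^T C_i \bar{x} + c_i^T \bar{x} + \gamma_i < 0$ for all $i \in \mathbb{N}_m$ is precisely the Slater condition, and in particular it forces $\mathcal F \neq \emptyset$. The standing assumptions stated just before the corollary for $(\mathcal P^{cq})$ guarantee that $p_j(x) \geq 0$ and $q(x) > 0$ over the feasible set. With all hypotheses in place, Theorem \ref{strong:frac} yields $\inf(\mathcal P^{cq}) = \max(\mathcal D)$, where $(\mathcal D)$ is the SOS dual \eqref{dualfrac}.

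Finally I would translate the SOS dual into its semidefinite form. In the quadratic case $d = 2$, and the constraint $\sum_{j} \delta_j p_j + \sum_{i} \lambda_i g_i - \mu q \in \Sigma^2_2$ is equivalent to nonnegativity of that quadratic polynomial, which---exactly as in the passage deriving $(D^{cq})$ from $(D)$---is equivalent to positive semidefiniteness of the associated $(n+1)\times(n+1)$ matrix appearing in $(\mathcal D^{cq})$. Hence $\max(\mathcal D) = \max(\mathcal D^{cq})$, and combining this with the previous step gives $\inf(\mathcal P^{cq}) = \max(\mathcal D^{cq})$.

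I expect no genuine obstacle here: the entire argument is a verification that the hypotheses of Theorem \ref{strong:frac} are met, followed by the standard quadratic-to-SDP reduction. The only point requiring a moment's care is the sign convention on the denominator---one must read $-B \in \mathbb{S}^{n}_{+}$ as the assertion that $-q$ (rather than $q$) is the convex, and hence SOS-convex, polynomial, which is exactly the form in which Theorem \ref{strong:frac} states its hypothesis.
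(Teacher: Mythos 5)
Your proposal is correct and follows essentially the same route as the paper: verify the hypotheses of Theorem \ref{strong:frac} (SOS-convexity of the quadratics $p_j$, $g_i$ and of $-q$ via $-B \in \mathbb{S}^n_+$, plus the Slater condition and the standing sign assumptions), then identify $(\mathcal D)$ with $(\mathcal D^{cq})$ through the equivalence of $\Sigma^2_2$-membership, nonnegativity, and positive semidefiniteness of the associated matrix. The paper merely performs these two steps in the opposite order, which makes no mathematical difference.
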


\begin{proof}Note that the sum of squares constraint in its associated dual problem $\sum_{j=1}^{r}{\delta_j p_j} + \sum_{i=1}^{m}{\lambda_i g_i} -\mu q \in \Sigma^2_2$ is equivalent to the inequality $\sum_{j=1}^{r}{\delta_j p_j} + \sum_{i=1}^{m}{\lambda_i g_i} -\mu q \geq 0$. This is equivalent to
\begin{equation*}
\begin{pmatrix}
\sum\limits_{j=1}^r\delta_j \alpha_j + \sum\limits_{i=1}^m\lambda_i\gamma_i  -\mu\beta    &     \frac{1}{2} (\sum\limits_{j=1}^r \delta_j a_j^T  + \sum\limits_{i=1}^m\lambda_i c_i^T -\mu b^T)  \\
\frac{1}{2} (\sum\limits_{j=1}^r \delta_j a_j  + \sum\limits_{i=1}^m\lambda_i c_i  -\mu b)  &     \sum\limits_{j=1}^r \delta_j A_j + \sum\limits_{i=1}^m\lambda_i C_i -\mu B
\end{pmatrix} \succeq 0.
\end{equation*}
So, our dual problem $(\mathcal D)$ collapses to $(\mathcal D^{cq})$.
Since the Slater condition holds and any convex quadratic function is a SOS-convex polynomial, by applying Theorem \ref{strong:frac} we get $\inf(\mathcal P^{cq}) = \max(\mathcal D^{cq})$.
\end{proof}


\begin{corollary}
Let $p$, $g_i$ and $-q$ be SOS-convex polynomials for all $i\in \mathbb{N}_m$, such that $p(x)\geq 0$ and $q(x) > 0 $ for all $x\in \mathcal F:= \{ x\in\mathbb{R}^n : g_i(x) \leq 0, i\in \mathbb{N}_m\} \neq \emptyset$. If the Slater condition holds, then
$$ \inf\limits_{x\in\mathcal F} \frac{p(x)}{q(x)}  = \max\limits_{\mu\in\mathbb{R},\lambda\in\mathbb{R}^m_+}\left\{ \mu : p + \sum_{i=1}^{m}{\lambda_i g_i} -\mu q \in \Sigma^2_k  \right\}$$
where $k$ is the smallest even number such that $k\geq \max\{ \deg p, \deg q, \max\limits_{i\in \mathbb{N}_m}\deg g_i \}$.
\end{corollary}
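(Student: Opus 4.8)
The plan is to recognize that this corollary is precisely the single-ratio specialization of Theorem \ref{strong:frac}, obtained by setting $r=1$. First I would write $p_1 := p$ and observe that when $r=1$ the simplex $\Delta = \{\delta\in\mathbb{R}^r_+ : \sum_{j=1}^r \delta_j = 1\}$ collapses to the single point $\delta_1 = 1$. Consequently, the objective of the minimax rational program $(\mathcal P)$ reduces to
$$ \max_{j\in \mathbb{N}_1} \frac{p_j(x)}{q(x)} = \frac{p(x)}{q(x)}, $$
so that $\inf(\mathcal P) = \inf_{x\in\mathcal F} p(x)/q(x)$ is exactly the left-hand side of the asserted identity.

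Next I would check that the dual $(\mathcal D)$ of \eqref{dualfrac} likewise specializes correctly. With $\delta_1 = 1$, the sum-of-squares constraint $\sum_{j=1}^r \delta_j p_j + \sum_{i=1}^m \lambda_i g_i - \mu q \in \Sigma^2_d$ becomes $p + \sum_{i=1}^m \lambda_i g_i - \mu q \in \Sigma^2_d$, and the degree bound $d \geq \max\{\deg q, \max_{j}\deg p_j, \max_i \deg g_i\}$ reduces to $d \geq \max\{\deg p, \deg q, \max_i \deg g_i\}$, which is exactly the number $k$ in the statement. Hence the right-hand side of the claimed equality is precisely $\max(\mathcal D)$ in this specialized instance.

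Finally, I would verify that all hypotheses of Theorem \ref{strong:frac} are in force: $p$, each $g_i$, and $-q$ are SOS-convex polynomials; $p(x)\geq 0$ and $q(x)>0$ on the nonempty feasible set $\mathcal F$; and the Slater condition holds. These are exactly the assumptions listed here once we read them with $r=1$. Applying Theorem \ref{strong:frac} then yields $\inf(\mathcal P) = \max(\mathcal D)$, which is the claimed identity. I do not expect any genuine obstacle in this argument: the entire substantive content has already been established in Theorem \ref{strong:frac}, and the only thing to carry out is the routine bookkeeping that $\Delta$ degenerates to a single point and that the degree parameter $d$ coincides with $k$.
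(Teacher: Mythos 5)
Your proposal is correct and matches the paper's own proof, which simply states that the corollary is a straightforward consequence of Theorem \ref{strong:frac} when $r=1$. The extra bookkeeping you spell out (the collapse of $\Delta$ to the single point $\delta_1=1$ and the identification of the degree bound $d$ with $k$) is exactly the routine specialization the paper leaves implicit.
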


\begin{proof}
It is a straightforward consequence of Theorem \ref{strong:frac} when $r=1$.
\end{proof}


Next we show that the non-negativity of the polynomials $p_j$'s can be dropped whenever $q$ is an affine function.

\begin{corollary}
\label{strong:4}
Let $p_j$ and $g_i$ be SOS-convex polynomials for all $j\in \mathbb{N}_r$ and $i\in \mathbb{N}_m$, $b\in\mathbb{R}^n$ and $\beta\in\mathbb{R}$ such that $b^Tx+\beta>0$ for all $x\in \mathcal F:= \{ x\in\mathbb{R}^n : g_i(x) \leq 0, i\in \mathbb{N}_m\} \neq \emptyset$. If the Slater condition holds, then
\begin{equation*}
\inf\limits_{x\in\mathcal F} \, \max\limits_{j\in \mathbb{N}_r} \, \frac{p_j(x)}{b^Tx+\beta} = \max\limits_{\substack{\mu\in\mathbb{R} \\ \delta\in\Delta, \lambda\in\mathbb{R}^m_+}} \left\{ \mu : \sum\limits_{j=1}^{r}{\delta_j p_j(x)} + \sum\limits_{i=1}^{m}{\lambda_i g_i(x)} -\mu (b^Tx + \beta)  \in \Sigma^2_d\right\}.
\end{equation*}
\end{corollary}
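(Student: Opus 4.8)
The plan is to recognize this as the affine-$q$ specialization of Theorem \ref{strong:frac}, for which the nonnegativity hypothesis $p_j\ge 0$ can be dropped, and to adapt that theorem's proof rather than invoke it as a black box (since its hypotheses are no longer all met). Writing $q(x):=b^Tx+\beta$, the single structural fact I would exploit is that $q$ is affine, hence both $q$ and $-q$ have vanishing Hessian and are therefore SOS-convex; consequently $-\mu q$ is SOS-convex for every $\mu\in\mathbb{R}$, irrespective of sign. This is exactly the ingredient that, in Theorem \ref{strong:frac}, had to be supplied by the sign constraint $\mu^*\ge 0$ (itself a consequence of $p_j\ge 0$): removing $p_j\ge 0$ costs us that sign control, and affineness of $q$ is what compensates.

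First I would dispose of weak duality and finiteness. For any dual-feasible $(\delta,\lambda,\mu)$ the membership $\sum_j\delta_j p_j+\sum_i\lambda_i g_i-\mu q\in\Sigma^2_d$ forces this polynomial to be nonnegative on all of $\mathbb{R}^n$; restricting to $x\in\mathcal F$, using $g_i(x)\le 0$ and $\delta\in\Delta$, and dividing by $q(x)>0$ yields $\max_{j}p_j(x)/q(x)\ge\mu$, so the dual value is $\le\inf$. If $\inf=-\infty$ the dual is then infeasible and both sides equal $-\infty$; hence I may assume $\mu^*:=\inf$ is finite.

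The core is the parametric reduction of \eqref{aux:problem}. Since $q>0$ on $\mathcal F$, for every $\mu\in\mathbb{R}$ one has $\inf(\mathcal P)\ge\mu$ if and only if $\inf(P_\mu)\ge 0$, so taking $\mu=\mu^*$ gives $\inf(P_{\mu^*})\ge 0$. Each objective piece $p_j-\mu^* q$ is a sum of the SOS-convex polynomial $p_j$ and the affine (hence SOS-convex) polynomial $-\mu^* q$, so $(P_{\mu^*})$ is a minimax program of SOS-convex polynomials over SOS-convex constraints satisfying the Slater condition. Theorem \ref{strong:02} then applies and gives $\inf(P_{\mu^*})=\max(D_{\mu^*})$ with $(D_{\mu^*})$ as in \eqref{ax:dl}. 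Because this maximum is $\ge 0$ and is attained, there is a feasible triple $(\bar{\delta},\bar{\lambda},\bar{\theta})$ with $\bar{\theta}\ge 0$, whence $\sum_j\bar{\delta}_j p_j+\sum_i\bar{\lambda}_i g_i-\mu^* q\in\bar{\theta}+\Sigma^2_d\subset\Sigma^2_d$. Thus $(\bar{\delta},\bar{\lambda},\mu^*)$ is feasible for the dual displayed in the statement, giving $\mu^*\le\sup$; combined with weak duality this forces equality and attainment, i.e. $\inf(\mathcal P)=\max(\mathcal D)$.

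The only genuinely delicate point, and the place where this differs from Theorem \ref{strong:frac}, is that $\mu^*$ may now be negative, so I must keep track that every step above is valid for $\mu^*$ of either sign. The SOS-convexity of $p_j-\mu^* q$ is the sole place the sign could have mattered, and it is precisely there that the affineness of $q$ (rather than merely $-q$ being SOS-convex) rescues the argument; everything else in the proof of Theorem \ref{strong:frac} is sign-agnostic. The finiteness caveat is the minor secondary point, dispatched by the trivial $-\infty$ case above.
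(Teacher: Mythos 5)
Your proof is correct and follows essentially the same route as the paper: the paper's own proof is precisely the one-line observation that the argument of Theorem \ref{strong:frac} goes through verbatim because, with $q(x)=b^Tx+\beta$ affine, each $p_j-\mu^* q$ is SOS-convex regardless of the sign of $\mu^*$, which is exactly the key point you isolate and then execute in full. Your explicit treatment of the $\inf(\mathcal P)=-\infty$ case is in fact slightly more careful than the paper, which tacitly carries over the finiteness argument from Theorem \ref{strong:frac} even though that argument relied on $p_j\geq 0$.
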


\begin{proof}
The proof follows the same line of arguments as the proof of Theorem \ref{strong:frac}, except that, in the case $q(x):=b^Tx+\beta$ for all $x\in\mathbb{R}^{n}$, all polynomials $p_j -\mu^*q $ are SOS-convex without the non-negativity of all $p_j$'s, and therefore, of $\mu^*$.
\end{proof}


For the particular problem

\begin{equation*}
\begin{array}{ccl}
	(\mathcal P^{l})  & \inf\limits_{x\in\mathbb{R}^n} & \max\limits_{j\in \mathbb{N}_r}\,\frac{a_j^T x + \alpha_j}{b^T x + \beta}   \\
	       & \text{s.t.} &  c_i^T x + \gamma_i \leq 0, \ i\in \mathbb{N}_m,
\end{array}
\end{equation*}
the corresponding dual problem can be stated as the following linear programming problem
\begin{eqnarray}
	(\mathcal D^{l})   &  \max    &   \mu   \nonumber \\
	        &   \text{s.t.}   &   \sum\limits_{j=1}^r \delta_j a_j + \sum\limits_{i=1}^m\lambda_i c_i   -\mu b = 0,  \label{const:lin1} \\
					&          &   \sum\limits_{j=1}^r \delta_j \alpha_j  + \sum\limits_{i=1}^m\lambda_i\gamma_i - \mu\beta \geq 0,   \label{const:lin2}  \\
			    &          &   \delta\in\Delta, \lambda\in\mathbb{R}^m_+, \mu\in\mathbb{R}.  \nonumber
\end{eqnarray}

\begin{corollary}
Let $\alpha_j,\beta,\gamma_i \in \mathbb{R}$ and $a_j,b,c_i \in \mathbb{R}^n$ for all $j\in \mathbb{N}_r$ and $i\in \mathbb{N}_m$. Assume that $b^T x + \beta > 0$ for all feasible point $x$ of $\mathcal P^l$. Then,
$$ \inf(\mathcal P^{l}) = \max(\mathcal D^{l}).$$
\end{corollary}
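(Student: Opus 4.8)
The plan is to recognize $(\mathcal P^l)$ as the special case of Corollary \ref{strong:4} in which every $p_j(x)=a_j^Tx+\alpha_j$ and every $g_i(x)=c_i^Tx+\gamma_i$ is affine and $q(x)=b^Tx+\beta$ is affine with $b^Tx+\beta>0$ on $\mathcal F$. First I would observe that any affine polynomial is SOS-convex, since its Hessian is the zero matrix, which is trivially an SOS-matrix polynomial. Hence the data of $(\mathcal P^l)$ meet the hypotheses of Corollary \ref{strong:4}, and that corollary yields
$$\inf(\mathcal P^l)=\max_{\mu,\delta,\lambda}\Big\{\mu:\sum_{j=1}^r\delta_j p_j+\sum_{i=1}^m\lambda_i g_i-\mu(b^Tx+\beta)\in\Sigma^2_d,\ \delta\in\Delta,\ \lambda\in\mathbb{R}^m_+\Big\}.$$
It then remains only to show that this SOS-feasible region coincides with the feasible region of $(\mathcal D^l)$.

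The key step is that the polynomial inside the SOS constraint is affine in $x$; writing
$$L(x)=\Big(\sum_{j=1}^r\delta_j a_j+\sum_{i=1}^m\lambda_i c_i-\mu b\Big)^Tx+\Big(\sum_{j=1}^r\delta_j\alpha_j+\sum_{i=1}^m\lambda_i\gamma_i-\mu\beta\Big),$$
I would use the elementary fact that an affine polynomial lies in $\Sigma^2_d$ if and only if its linear part vanishes and its constant term is nonnegative. Indeed, any element of $\Sigma^2$ is globally nonnegative, and an affine function bounded below on all of $\mathbb{R}^n$ must have zero gradient (otherwise it tends to $-\infty$ in the direction opposite to its gradient), so it reduces to a nonnegative constant, which is a single square; the converse is immediate. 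Applying this characterization to $L$ shows that the SOS constraint is equivalent to the pair of conditions \eqref{const:lin1} and \eqref{const:lin2}, so the SOS-feasible set is exactly that of $(\mathcal D^l)$ and the two maximum values agree. Combining this with the displayed equality from Corollary \ref{strong:4} gives $\inf(\mathcal P^l)=\max(\mathcal D^l)$.

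I expect the only genuinely substantive point to be this collapse of the sum-of-squares condition to the two linear conditions, namely the verification that a globally nonnegative affine polynomial is necessarily a nonnegative constant; everything else is a direct specialization of the already-established rational-programming duality. A minor subtlety to address is the constraint qualification underlying Corollary \ref{strong:4}: its invocation presumes the Slater condition, which here reads $\{x:c_i^Tx+\gamma_i<0,\ i\in\mathbb{N}_m\}\neq\emptyset$. Since $\mathcal F$ is polyhedral, the relevant normal-cone/qualification hypothesis holds automatically, so one may instead route the argument through Theorem \ref{strong:03} if one prefers to avoid assuming strict feasibility; either way the reduction of the affine SOS constraint is the heart of the proof.
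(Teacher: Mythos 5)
Your proposal is correct and takes essentially the same route as the paper: specialize Corollary \ref{strong:4} to affine data (affine polynomials being trivially SOS-convex) and then collapse the sum-of-squares constraint on the resulting affine polynomial to the two linear conditions \eqref{const:lin1} and \eqref{const:lin2}. You additionally spell out two points the paper leaves implicit --- the proof that a globally nonnegative affine polynomial must be a nonnegative constant, and the fact that invoking Corollary \ref{strong:4} presumes the Slater condition, which your polyhedral normal-cone remark sensibly addresses --- but the underlying argument is the same.
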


\begin{proof}
By applying Corollary \ref{strong:4}, we get that $\inf(\mathcal P^{l})$ equals to
\begin{equation*}
\max\limits_{\substack{\mu\in\mathbb{R} \\ \delta\in\Delta, \lambda\in\mathbb{R}^m_+}} \left\{  \mu  : \sum\limits_{j=1}^r \delta_j \left( a_j^T x + \alpha_j  \right)  + \sum\limits_{i=1}^m\lambda_i \left(c_i^T x + \gamma_i\right) - \mu \left( b^T x + \beta\right) \in \Sigma^2_2  \right\}
\end{equation*}
Since the sum of squares constraint in the above dual problem is equivalent to \eqref{const:lin1} and \eqref{const:lin2}, we conclude $\inf(\mathcal P^{l}) = \max(\mathcal D^{l})$.
\end{proof}


If a minimizer $x^*$ of $(\mathcal P)$ is known, then the Slater condition in Theorem \ref{strong:frac} can be replaced by a weaker condition in order to derive strong duality between $(\mathcal P)$ and $(\mathcal D)$.

\begin{theorem}
\label{strong:frac:min}
Let $p_j$, $g_i$ and $-q$ be SOS-convex polynomials for all $j\in \mathbb{N}_r$ and $i\in \mathbb{N}_m$, such that $p_j(x)\geq 0$ and $q(x) > 0 $ for all $x\in \mathcal F:= \{ x\in\mathbb{R}^n : g_i(x) \leq 0, i\in \mathbb{N}_m\} \neq \emptyset$. Let $x^*\in\mathcal F$ be an optimal solution of $(\mathcal P)$ and assume that the normal cone condition for $\mathcal F$ at $x^*$ holds. Then,
$\min(\mathcal P) = \max(\mathcal D).$
\end{theorem}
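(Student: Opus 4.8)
The plan is to reduce the fractional program $(\mathcal P)$ to the parametric minimax problem $(P_{\mu^*})$ of \eqref{aux:problem} and then apply the normal-cone duality Theorem \ref{strong:03} to that reduced problem. This mirrors the proof of Theorem \ref{strong:frac}, with the Slater-based Theorem \ref{strong:02} replaced by the normal-cone-based Theorem \ref{strong:03}.

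First I would set $\mu^* := \min(\mathcal P)$. Since $x^*\in\mathcal F$ is optimal, $\mu^* = \max_{j\in\mathbb{N}_r} p_j(x^*)/q(x^*)$ is finite, and because $p_j(x)\geq 0$ and $q(x)>0$ on $\mathcal F$ we have $\mu^*\in\mathbb{R}_+$. Using the elementary equivalence $\inf(\mathcal P)\geq\mu \iff \inf(P_\mu)\geq 0$ (valid because $q>0$ on $\mathcal F$), I obtain $\inf(P_{\mu^*})\geq 0$. Evaluating at $x^*$, optimality $\max_j p_j(x^*)/q(x^*)=\mu^*$ forces $p_j(x^*)-\mu^* q(x^*)\leq 0$ for every $j$, so $\max_j\{p_j(x^*)-\mu^* q(x^*)\}\leq 0$. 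Combined with $\inf(P_{\mu^*})\geq 0$, this shows that $x^*$ is an optimal solution of $(P_{\mu^*})$ with $\min(P_{\mu^*})=0$.

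Next I would observe that each objective polynomial $p_j-\mu^* q$ of $(P_{\mu^*})$ is SOS-convex: $p_j$ is SOS-convex, $-q$ is SOS-convex, $\mu^*\geq 0$, and a nonnegative combination of SOS-convex polynomials is SOS-convex. Since $x^*$ is optimal for $(P_{\mu^*})$ and the normal cone condition for $\mathcal F$ at $x^*$ is assumed, Theorem \ref{strong:03} applies to $(P_{\mu^*})$ and yields $\min(P_{\mu^*})=\max(D_{\mu^*})$, where $(D_{\mu^*})$ is the dual \eqref{ax:dl}; here the degree bound $d$ already covers $\deg(p_j-\mu^* q)\leq\max\{\deg p_j,\deg q\}$. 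As $\min(P_{\mu^*})=0$, the attained dual value is $0$, so there exist $\bar\delta\in\Delta$ and $\bar\lambda\in\mathbb{R}^m_+$ with $\sum_j\bar\delta_j p_j+\sum_i\bar\lambda_i g_i-\mu^* q\in\Sigma^2_d$ (taking $\theta=0$). This is precisely feasibility of $(\bar\delta,\bar\lambda,\mu^*)$ for $(\mathcal D)$, giving $\mu^*\leq\sup(\mathcal D)$.

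Finally, weak duality $\sup(\mathcal D)\leq\inf(\mathcal P)=\mu^*$ holds by the same argument as in Theorem \ref{strong:frac} (any SOS multiplier forces some ratio $p_{j_0}/q\geq\mu$ on $\mathcal F$), so $\mu^*=\max(\mathcal D)$ with the maximum attained at $(\bar\delta,\bar\lambda,\mu^*)$, whence $\min(\mathcal P)=\max(\mathcal D)$. I expect the only delicate point to be the identification that the given optimal $x^*$ of $(\mathcal P)$ is simultaneously optimal for the reduced problem $(P_{\mu^*})$ with value exactly $0$; once this is in hand, Theorem \ref{strong:03} closes the argument without further work.
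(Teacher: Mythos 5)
Your proposal is correct and follows essentially the same route as the paper: reduce $(\mathcal P)$ to the parametric problem $(P_{\mu^*})$, verify that $x^*$ solves it with value $0$ (the paper cites \cite[Lemma 2.3]{Lai99} for this equivalence, which you instead verify directly), apply Theorem \ref{strong:03} under the normal cone condition to get an SOS certificate with $\theta = 0$, and conclude by weak duality. No gaps.
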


\begin{proof}
Let $\mu^* := \min(\mathcal P) \in \mathbb{R}_{+}$. Note that $(\mathcal P)$ has optimal solution $x^*$ with optimal value $\mu^*$ if and only if $x^*$ is an optimal solution of $(P_{\mu^*})$ with optimal value $0$ (cf. \cite[Lemma 2.3]{Lai99}), where $(P_{\mu^*})$ is stated in \eqref{aux:problem}. Since, for each $j\in\mathbb{N}_r$, $p_j - \mu^* q$ is a SOS-convex polynomial and the normal cone condition for $\mathcal F$ at $x^*$ holds, by Theorem \ref{strong:03} we have that $\min(P_{\mu^*}) = \max(D_{\mu^*})$ where $(D_{\mu^*})$ has been stated in \eqref{ax:dl}. As $\max(D_{\mu^*}) = 0$, there exist $\bar{\delta}\in\Delta$ and $\bar{\lambda}\in \mathbb{R}^m_+$ such that
$$ \sum\limits_{j=1}^{r}{\bar{\delta}_j p_j} + \sum_{i=1}^{m}{\bar{\lambda}_i g_i} -\mu^*q \in \Sigma^2_d. $$
Therefore, $(\bar{\delta},\bar{\lambda},\mu^*)$ is a feasible point of $(\mathcal D)$, so $\mu^* \leq \sup(\mathcal D) $. Since weak duality always holds, we conclude $\min(\mathcal P) = \max(\mathcal D)$.
\end{proof}


\section*{Appendix: SDP Representations of Dual Programs.}
\label{APP}


Finally, for the sake of completeness, we show how our dual problem $(\mathcal D)$ given in \eqref{dualfrac} can be represented by a semidefinite linear programming problem. To this aim, let us recall some basic facts on the relationship between sums of squares polynomials and semidefinite programming problems.

We denote by $\mathbb{S}^{n}$ the space of symmetric $n\times n$ matrices. For any $A,B\in \mathbb{S}^{n}$, we write $A \succeq 0$ if and only if $A$ is positive semidefinite, and $\left\langle A, B\right\rangle$ stands for $\operatorname{trace}(AB)$. Let $\mathbb{S}^{n}_{+}:=\{ A \in \mathbb{S}^{n} : A\succeq 0\}$ be the closed convex cone of positive semidefinite $n\times n$ (symmetric) matrices. The space of all real polynomials on $\mathbb{R}^n$ with degree $d$ is denoted by $\mathbb{R}_d[x_1,\ldots,x_n]$ and its canonical basis is given by 
\begin{equation*}
  y(x) \equiv (x_{\alpha})_{\left|\alpha\right| \leq d} := (1,x_1,x_2,\ldots,x_n,x_1^2,x_1x_2,\ldots,x_2^2,\ldots,x_n^2,\ldots,x_1^{d},\ldots,x_n^d)^T,
\end{equation*}
which has dimension $e(d,n):=\binom{n+d}{d}$, and $\alpha \in \mathbb{N}^n$ is a multi-index such that $\left|\alpha\right|:=\sum_{i=1}^{n}{\alpha_i}$. Let $\mathcal N:=\{ \alpha\in \mathbb{N}^n : \left|\alpha\right| \leq d \}$. Thus, if $f$ is a polynomial on $\mathbb{R}^n$ with degree at most $d$, one has
$$ f(x) = \sum\limits_{\alpha\in \mathcal{N}} f_{\alpha}x_{\alpha}. $$

Assume that $d$ is an even number, and let $k:=d/2$. Then, according to \cite[Proposition 2.1]{Lasserre}, $f$ is a sum of squares polynomial if and only if there exists $Q \in \mathbb{S}_{+}^{e(k,n)}$ such that $f(x)= y(x)^T Q \,y(x)$. By writing $y(x) y(x)^T = \sum_{\alpha\in \mathcal{N}} B_{\alpha}x_{\alpha}$ for appropiate matrices $(B_{\alpha})\subset \mathbb{S}^{e(k,n)}$, one has that $f$ is a sum of squares polynomial if and only if there exists $Q \in \mathbb{S}_{+}^{e(k,n)}$ such that $\left\langle Q,B_{\alpha}\right\rangle = f_{\alpha}$ for all $\alpha \in \mathcal{N}$.
\smallskip

Using the above characterization, we see that our dual problem $(\mathcal D)$ can be equivalently rewritten as the following semidefinite programming problem.
\begin{equation*}
\begin{array}{ccl}
(\mathcal{SD}) & \sup   &   \mu \\
     & \text{s.t.}   &   \sum\limits_{j=1}^{r}{\delta_j (p_j)_{\alpha}} + \sum\limits_{i=1}^{m}{\lambda_i (g_i)_{\alpha}} - \mu q_{\alpha} = \left\langle Q,B_{\alpha}\right\rangle \qquad \forall \alpha \in \mathcal{N}, \\
     &   & \sum\limits_{j=1}^{r}{\delta_j} = 1, \\
     &   &  \delta\in\mathbb{R}^r_+, \lambda\in\mathbb{R}^m_+, \mu\in\mathbb{R}, Q \in \mathbb{S}^{e(k,n)}_{+}.
\end{array}
\end{equation*}

Letting $q_{\alpha}=1$ for $\alpha=(0,\ldots,0)$ and $q_{\alpha}=0$ otherwise, we get the SDP representation for problem $(D)$ in \eqref{dual}.




\end{document}